\newtheorem{theorem}{Theorem}[section]
\newtheorem{proposition}[theorem]{Proposition}
\newtheorem{corollary}[theorem]{Corollary}
\theoremstyle{definition}
\newtheorem{definition}[theorem]{Definition}
\newtheorem{example}[theorem]{Example}
\theoremstyle{remark}
\newtheorem{remark}[theorem]{Remark}
\numberwithin{equation}{section}
\begin{document}

\title{Soft Cone Metric Spaces and Some Fixed Point Theorems}

%    Information for first author
\author{\.{I}smet Alt\i nta\c{s}}
%    Address of record for the research reported here
\address{Department of Mathematics, Faculty of Arts and Sciences, Sakarya University, Sakarya, 54187, Turkey}
%    Current address
%\curraddr{Department of Mathematics and Statistics,
%Case Western Reserve University, Cleveland, Ohio 43403}
\email{ialtintas@sakarya.edu.tr}
%    \thanks will become a 1st page footnote.
%\thanks{The first author was supported in part by NSF Grant \#000000.}

%    Information for second author
\author{Kemal Ta\c{s}k\"{o}pr\"{u}}
\address{Department of Mathematics, Faculty of Arts and Sciences, Bilecik \c{S}eyh Edebali University, Bilecik, 11000, Turkey}
\email{kemal.taskopru@bilecik.edu.tr}
%\thanks{Support information for the second author.}

%    General info
\subjclass[2010]{54H25,54C60,46S40}

%\date{January 1, 2001 and, in revised form, June 22, 2001.}

%\dedicatory{This paper is dedicated to our advisors.}

\keywords{soft set, soft metric space, soft Banach space, soft cone metric space, fixed point}

\begin{abstract}
This paper is an introduction to soft cone metric spaces. We define the concept of soft cone metric via soft element, investigate soft converges in soft cone metric spaces and prove some fixed point theorems for contractive mappings on soft cone metric spaces.
\end{abstract}

\maketitle

\section{Introduction}\label{Sec1}
In 1999, Molodtsov \cite{Mol99} introduced the concept of soft sets as a new mathematical tool for dealing with uncertainties. He has shown several applications of this theory in solving many practical problems in economics, engineering, social science, medical science, etc. Then, Maji et al. \cite{MBR03,MR02} studied soft et theory in detail and applied it to decision making problems. In the line of reduction and addition of parameters of soft sets, some works have been done by Chen et al. \cite{CTYW05}, Pei and Miao \cite{PM05}, Kong et al. \cite{KJZW15}, Zou and Xiao \cite{ZX08}. Akta\c{s} and \c{C}a\u{g}man \cite{AC07} introduced the notion of soft group and discussed various properties of it. Shabir and Naz \cite{SN11} introduced the notion of soft topological space. Ayg\"{u}no\u{g}lu and Ayg\"{u}n \cite{AyA12} studied soft topological spaces and considered the the concept of soft point. Also, \c{C}etkin and Ayg\"{u}n \cite{CA16} studied convergence of soft nets in soft topological spaces. Das and Samanta introduced a notion of soft reel set and number \cite{DS12}, soft complex set and number \cite{DS13}, soft metric space \cite{DS13a,DS13b}, soft normed linear space \cite{DS13c,DS13d}. Chiney and Samanta \cite{CS15} introduced the concept of vector soft topology, Das, et al. studied on soft linear space and soft normed space \cite{DMS15}. G\"{u}ler et al. \cite{GYO16} introduced soft $G$-metric space.

On the other hand, in 2007, Huang and Zhang \cite{HZ07} introduced cone metric spaces with normal cone as a generalization of metric space. Rezapour and Hamlbarani \cite{RH08} presented the results of \cite{HZ07} for the case of cone metric space without normality in cone. In recent years, many authors workout on fixed point theorems in cone metric spaces and other generalizations of metric spaces (see \cite{MAK15, AAV13, AJ08, AlA12, ArAV12, AyAV12, AAB09, ABA10, AAB10, AM13, AMAR13, BAA09, CB11, GD03, HRS13 ,JKR11}).

In this paper, we introduce a concept of soft cone metric space which is based on soft elements. The plan of this paper is as follows: In \Cref{Sec2}, some preliminary definitions and results are given. In \Cref{Sec3}, we define the concept of soft cone metric according to soft element and described some of its properties. Also, we investigate soft converges in soft cone metric spaces. In \Cref{Sec4}, we prove some fixed point theorems for contractive mappings on soft cone metric spaces. 

\section{Preliminaries}\label{Sec2}

\begin{definition}
\cite{Mol99} Let $A$ be a set of parameters and $E$ be an initial universe. Let $\mathcal{P}(E)$ denote the power set of $E$. A pair $\left( F,A \right)$ is called a soft set over $E$, where $F$ is a mapping given by $F:A \rightarrow \mathcal{P}(E)$. In other words, a soft set over $E$ is a parametrized family of subsets of the universe $E$. For $\lambda \in A $, $F(\lambda)$ may be considered as the set of $\lambda$-approximate elements of the soft set $\left( F,A \right)$.
\end{definition}

\begin{definition}
\cite{MBR03} Let $\left( F,A \right)$ and $\left( G,A \right)$ be two soft sets over a common universe $E$.

\begin{enumerate}[label=(\alph*)]
\item $\left( F,A \right)$ is said to be null soft set, denoted by $\Phi$, if for all $\lambda \in A$, $F(\lambda)=\varnothing$. $\left( F,A \right)$ is said to an absolute soft set denoted by $\tilde{E}$, if for all $\lambda \in A$, $F(\lambda)=E$.

\item $\left( F,A \right)$ is said to be a soft subset of $\left( G,A \right)$ if for all $\lambda \in A$, $F(\lambda) \subseteq G(\lambda)$ and it is denoted by $\left( F,A \right) \tilde{\subseteq} \left( G,A \right)$. $\left( F,A \right)$ is said to be a soft upperset of $\left( G,A \right)$ if $\left( G,A \right)$ is a soft subset of $\left( F,A \right)$. We denote it by $\left( F,A \right) \tilde{\supseteq} \left( G,A \right)$. $\left( F,A \right)$ and $\left( G,A \right)$ is said to be equal if $\left( F,A \right)$ is a soft subset of $\left( G,A \right)$ and $\left( G,A \right)$ is a soft subset of $\left( F,A \right)$.

\item The intersection $\left( H,A \right)$ of $\left( F,A \right)$ and $\left( G,A \right)$ over $E$ is defined as $H(\lambda)=F(\lambda) \cap G(\lambda)$ for all $\lambda \in A$. We write $\left( F,A \right) \tilde{\cap} \left( G,A \right)=\left( H,A \right)$.

\item The union $\left( H,A \right)$ of $\left( F,A \right)$ and $\left( G,A \right)$ over $E$ is defined as $H(\lambda)=F(\lambda) \cup G(\lambda)$ for all $\lambda \in A$. We write $\left( F,A \right) \tilde{\cup} \left( G,A \right)=\left( H,A \right)$.

\item The cartesian product $(H,A)$ of $(F,A)$ and $(G,A)$ over $E$ denoted by $(H,A)=(F,A)\tilde{\times}(G,A)$, is defined as $H(\lambda)=F(\lambda) \times G(\lambda)$ for all $\lambda \in A$.

\item The difference $\left( H,A \right)$ of $\left( F,A \right)$ and $\left( G,A \right)$ over $E$ denoted by $\left( F,A \right) \tilde{\backslash} \left( G,A \right)=\left( H,A \right)$, is defined as $H(\lambda)=F(\lambda) \backslash G(\lambda)$ for all $\lambda \in A$.

\item The complement of $\left( F,A \right)$ is defined as $\left( F,A \right)^{c}=\left( F^{c},A \right)$, where $F^{c}:A \rightarrow \mathcal{P}(E)$ is a mapping given by  $F^{c}(\lambda)= B \backslash F(\lambda)$ for all $\lambda \in A$. Clearly, we have $\tilde{E}^{c}=\Phi$ and $\Phi^{c}=\tilde{E}$
\end{enumerate}

\end{definition}

\begin{definition}
\cite{DS12,DS13a} Let $A$ be a non-empty parameter set and $E$ be a non-empty set. Then a function $\epsilon:A \rightarrow E$ is said to be a soft element of $E$. A soft element $\epsilon$ of $E$ is said to belongs to a soft set $\left( F,A \right)$ of $E$ which is denoted by $\epsilon \tilde{\in}\left( F,A \right)$ if $\epsilon(\lambda) \in F(\lambda)$, $\forall \lambda \in A$. Thus for a soft set $\left( F,A \right)$ of $E$ with respect to the index set $A$, we have $F(\lambda)=\left\lbrace \epsilon(\lambda):\epsilon \tilde{\in} (F,A) \right\rbrace$, $\lambda \in A$. In that case, $\epsilon$ is also said to be a soft element of the soft set $\left( F,A \right)$. Thus every singleton soft set (a soft set $\left( F,A \right)$ of $E$ for which $F(\lambda)$ is a sigleton set, $\forall \lambda \in A$) can be identified with a soft element by simply identifying the singleton set with the element that it contains $\forall \lambda \in A$. 
\end{definition}

\begin{definition}
\cite{DS12,DS13a} Let $A$ be a set of parameters and $\mathbb{R}$ be the set of real numbers and $B(\mathbb{R})$ be the collection of all non-empty bounded subsets of $\mathbb{R}$. Then a mapping $F:A \rightarrow B(\mathbb{R})$ is called a soft real set, denoted by $\left( F,A \right)$. If specifically $\left( F,A \right)$ is a singleton soft set, then after identifying $\left( F,A \right)$ with the corresponding soft element, it will be called a soft real number.

The set of all soft real numbers is denoted by $\mathbb{R}(A)$ and the set of non-negative soft real numbers by $\mathbb{R}(A)^{*}$.

Throughout this paper, we consider the null soft set $\Phi$ and those soft sets $\left( F,A \right)$ over $E$ for which $F(\lambda) \neq \varnothing$, $\forall \lambda \in A$. We denote this collection by $S(\tilde{E})$. Thus for $\left( F,A \right)(\neq \varnothing) \in S(\tilde{E})$, $F(\lambda) \neq \varnothing$ for all $\lambda \in A$. For any soft set $\left( F,A \right) \in S(\tilde{E})$, the collection of all soft elements of $\left( F,A \right)$ is denoted by $SE \left( F,A \right)$.

Also, we use the notation $\tilde{x}$, $\tilde{y}$, $\tilde{z}$ to denote soft elements of soft set and $\tilde{r}$, $\tilde{s}$, $\tilde{t}$ to denote soft real numbers whereas $\bar{x}$, $\bar{y}$, $\bar{z}$ will denote a particular type of soft real numbers such that $\bar{r}(\lambda)=r$, for all $\lambda \in A$ etc. For $\tilde{r},\tilde{s} \in \mathbb{R}(A)$, 
\begin{align*}
\tilde{r}\tilde{\leq}\tilde{s}, \,\, \text{if} \,\, \tilde{r}(\lambda)\leq \tilde{s}(\lambda), \forall \lambda \in A, \quad  \quad  \tilde{r}\tilde{\geq}\tilde{s}, \,\, \text{if} \,\, \tilde{r}(\lambda)\geq \tilde{s}(\lambda), \forall \lambda \in A, \\
\tilde{r}\tilde{<}\tilde{s}, \,\, \text{if} \,\, \tilde{r}(\lambda)< \tilde{s}(\lambda), \forall \lambda \in A, \quad  \quad  \tilde{r}\tilde{>}\tilde{s}, \,\, \text{if} \,\, \tilde{r}(\lambda)> \tilde{s}(\lambda), \forall \lambda \in A.
\end{align*}
\end{definition}

\begin{proposition}
\cite{DS13a} \leavevmode
\begin{enumerate}[label=(\alph*)]
\item For any soft sets $\left( F,A \right),\left( G,A \right) \in S(\tilde{E})$, we have  $\left( F,A \right) \tilde{\subset} \left( G,A \right)$ if and only if every soft element of $\left( F,A \right)$ is also a soft elements of $\left( G,A \right)$.

\item Any collection of soft elements of a soft set can generate a soft subset of that soft set. The soft set constructed from a collection $\mathcal{B}$ of soft elements is denoted by $SS(\mathcal{B})$.

\item For any soft set $\left( F,A \right) \in S(\tilde{E})$, $SS(SE\left( F,A \right))=\left( F,A \right)$; whereas for a collection $\mathcal{B}$ of soft elements, $SE(SS(\mathcal{B})) \supset \mathcal{B}$, but, in general, $SE(SS(\mathcal{B})) \neq \mathcal{B}$.
\end{enumerate}
\end{proposition}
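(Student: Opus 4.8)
The plan is to prove the three parts in order, in each case simply unwinding the definitions of soft element, soft subset, and the operators $SE(\,\cdot\,)$ and $SS(\,\cdot\,)$; the only step that needs genuine care is the converse implication in part (a), where the standing hypothesis $(F,A)\in S(\tilde{E})$ is essential.

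For part (a), the ``if'' direction is immediate: if $(F,A)\tilde{\subseteq}(G,A)$ then $F(\lambda)\subseteq G(\lambda)$ for every $\lambda\in A$, so any soft element $\epsilon$ of $(F,A)$ satisfies $\epsilon(\lambda)\in F(\lambda)\subseteq G(\lambda)$ for all $\lambda$, i.e.\ $\epsilon\,\tilde{\in}\,(G,A)$. For the converse I would fix $\lambda_{0}\in A$ and an arbitrary $x\in F(\lambda_{0})$ and manufacture a soft element $\epsilon$ of $(F,A)$ with $\epsilon(\lambda_{0})=x$. This is exactly where $(F,A)\in S(\tilde{E})$, i.e.\ $F(\lambda)\neq\varnothing$ for every $\lambda\in A$, is used: choosing, for each $\lambda\neq\lambda_{0}$, some point $\epsilon(\lambda)\in F(\lambda)$ (an appeal to the axiom of choice) yields a well-defined map $\epsilon\colon A\to E$ which is a soft element of $(F,A)$. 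By hypothesis $\epsilon\,\tilde{\in}\,(G,A)$, hence $x=\epsilon(\lambda_{0})\in G(\lambda_{0})$; since $x$ and $\lambda_{0}$ were arbitrary, $F(\lambda)\subseteq G(\lambda)$ for all $\lambda$, that is, $(F,A)\tilde{\subseteq}(G,A)$. I expect this choice-of-representatives construction to be the main (and essentially only) obstacle in the whole proof.

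For part (b), given a non-empty collection $\mathcal{B}$ of soft elements of a soft set $(F,A)$, I would define $SS(\mathcal{B})=(H,A)$ by $H(\lambda)=\{\epsilon(\lambda):\epsilon\in\mathcal{B}\}$ for each $\lambda\in A$. Each $H(\lambda)$ is non-empty because $\mathcal{B}\neq\varnothing$, and $H(\lambda)\subseteq F(\lambda)$ since $\epsilon(\lambda)\in F(\lambda)$ for every $\epsilon\in\mathcal{B}$; hence $SS(\mathcal{B})\in S(\tilde{E})$ and $SS(\mathcal{B})\tilde{\subseteq}(F,A)$, which is the claim.

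For part (c), write $SE(F,A)$ for the collection of all soft elements of $(F,A)$. Applying the construction of part (b) gives $SS(SE(F,A))=(H,A)$ with $H(\lambda)=\{\epsilon(\lambda):\epsilon\,\tilde{\in}\,(F,A)\}$, and this set equals $F(\lambda)$ by the defining identity $F(\lambda)=\{\epsilon(\lambda):\epsilon\,\tilde{\in}\,(F,A)\}$ recorded in the definition of soft element; hence $SS(SE(F,A))=(F,A)$. Next, if $SS(\mathcal{B})=(H,A)$, then each $\epsilon\in\mathcal{B}$ has $\epsilon(\lambda)\in H(\lambda)$ for all $\lambda$, so $\epsilon\,\tilde{\in}\,SS(\mathcal{B})$ and therefore $\mathcal{B}\subseteq SE(SS(\mathcal{B}))$. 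Finally, to see this inclusion can be strict I would give a minimal counterexample: take $E=\{a,b\}$, $A=\{\lambda_{1},\lambda_{2}\}$, and let $\mathcal{B}=\{\epsilon_{1},\epsilon_{2}\}$ where $\epsilon_{1}$ is the constant soft element with value $a$ and $\epsilon_{2}$ the constant soft element with value $b$. Then $SS(\mathcal{B})=(H,A)$ with $H(\lambda_{1})=H(\lambda_{2})=\{a,b\}$, whose soft elements include the ``mixed'' map $\epsilon$ given by $\epsilon(\lambda_{1})=a$, $\epsilon(\lambda_{2})=b$, which is not in $\mathcal{B}$. Hence $SE(SS(\mathcal{B}))\supsetneq\mathcal{B}$ in general, completing the proof.
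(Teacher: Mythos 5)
Your proof is correct. Note, however, that the paper itself offers no proof to compare against: this proposition appears in the Preliminaries as a result quoted from Das and Samanta \cite{DS13a}. Your argument is the standard one for this statement — the only nontrivial point is exactly the one you isolate, namely the converse in (a), where the standing assumption $F(\lambda)\neq\varnothing$ for all $\lambda$ (plus a choice of representatives) lets you thread a soft element of $(F,A)$ through an arbitrary $x\in F(\lambda_{0})$ — and your parts (b), (c), including the two-parameter counterexample showing $SE(SS(\mathcal{B}))\supsetneq\mathcal{B}$ in general, are all sound.
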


\begin{definition}
\cite{DMS15} Let $E$ be a vector space over a field $\mathbb{K}$ and $A$ be a parameter set. Let $\left( F,A \right)$ be a soft set over $E$. Now $\left( F,A \right)$ is said to be a soft vector space or soft linear space of $E$ over $\mathbb{K}$ if $F(\lambda)$ is a vector subspace of $E$, $\forall \lambda \in A$. The soft element of $\left( F,A \right)$ is said to be a soft vector of $\left( F,A \right)$. In a similar manner a soft element of the soft set $\left( \mathbb{K},A \right)$ is said to be a soft scalar, $\mathbb{K}$ being the scalar field.

Let $(F_{1},A),(F_{2},A),\dots,(F_{n},A)$ be $n$ soft sets over $E$. Then $(F,A)=(F_{1},A)+(F_{2},A)+\cdots+(F_{n},A)$ is a soft set over $E$ and is defined as $F(\lambda)=\lbrace x_{1}+x_{2}+\cdots+x_{n}:x_{i} \in F_{i}(\lambda),i=1,2,\dots,n \rbrace$, $\forall \lambda \in A$. Let $\alpha \in \mathbb{K}$ be a scalar and $(F,A)$ be a soft set over $E$, then $ (\alpha F,A)$ is a soft set over $E$ and is defined as $\alpha F(\lambda)=\lbrace \alpha x:x \in F(\lambda)\rbrace$, $\forall \lambda \in A$.

A soft vector $\tilde{x}$ in a soft vector space over $E$ is said to be the null soft vector if $\tilde{x}(\lambda)=\theta$, $\forall \lambda \in A$, $\theta$ being the zero element of $E$. It will be denoted by $\Theta$. Let $\tilde{x},\tilde{y}$ be soft vectors and $\tilde{\alpha}$ be a soft scalar. Then the addition $\tilde{x}+\tilde{y}$ of $\tilde{x},\tilde{y}$ and scalar multiplication $\tilde{\alpha}.\tilde{x}$ of $\tilde{\alpha}$ and $\tilde{x}$ are defined by $(\tilde{x}+\tilde{y})(\lambda)=\tilde{x}(\lambda)+\tilde{y}(\lambda)$ and $(\tilde{\alpha}.\tilde{x})(\lambda)=\tilde{\alpha}(\lambda).\tilde{x}(\lambda)$, $\forall \lambda \in A$. Obviously, $\tilde{x}+\tilde{y}$ and $\tilde{\alpha}.\tilde{x}$ are soft vectors of $\left( F,A \right)$. Also, $\bar{0}.\tilde{\alpha}=\Theta$ and $-\bar{1}.\tilde{\alpha}=-\tilde{\alpha}$, $\forall \tilde{\alpha}\tilde{\in}\tilde{E}$ and $\tilde{k}.\Theta=\Theta$, $\forall \tilde{k}\tilde{\in}\tilde{\mathbb{K}}$. However, $\tilde{k}\tilde{\alpha}=\Theta$ does not necessarily imply that either $\tilde{k}=\bar{0}$ or $\tilde{\alpha}=\Theta$.
\end{definition}

\begin{definition}
\cite{DMS15,DS13d} Let $\tilde{E}$ be the absolute soft vector space. Then a mapping $\Vert \cdot \Vert: SE(\tilde{E}) \rightarrow \mathbb{R}(A)^{*}$ is said to be a soft norm on the soft vector space $\tilde{E}$ if $\Vert \cdot \Vert$ satisfies the following conditions:
\begin{itemize}
\item[(N1)] $\Vert \tilde{x} \Vert \tilde{\geqslant} \bar{0} $, for all $\tilde{x} \tilde{\in} \tilde{E}$

\item[(N2)] $\Vert \tilde{x} \Vert = \bar{0} $  if and only if $\tilde{x}=\Theta$

\item[(N3)] $\Vert \tilde{\alpha}\tilde{x} \Vert = \vert \tilde{\alpha} \vert \Vert \tilde{x} \Vert$ for all $\tilde{x} \tilde{\in} \tilde{E}$ and for every soft scalar $\tilde{\alpha}$

\item[(N4)] $\Vert \tilde{x}+\tilde{y} \Vert \tilde{\leqslant} \Vert \tilde{x} \Vert + \Vert \tilde{y} \Vert$ for all $\tilde{x},\tilde{y} \tilde{\in} \tilde{E}$
\end{itemize}

The soft vector space $\tilde{E}$ with soft norm $\Vert \cdot \Vert$ on $\tilde{E}$ is said to be a soft normed linear space and is denoted by $(\tilde{E},\Vert \cdot \Vert,A)$ or $(\tilde{E},\Vert \cdot \Vert)$. (N1)-(N4) are said to be soft norm axioms.
\end{definition}

\begin{example}
\cite{DMS15} Let $\mathbb{R}(A)$ be the set of all soft real numbers. Define $\Vert \cdot \Vert : \mathbb{R}(A) \rightarrow \mathbb{R}(A)^{*}$ by $\Vert \tilde{x} \Vert=\vert \tilde{x} \vert$ for all $\tilde{x} \tilde{\in} \mathbb{R}(A)$, where $\vert \tilde{x} \vert$ denotes the modules of soft real number. Then, $\Vert \cdot \Vert$ is a soft norm on $\mathbb{R}(A)$ and $(\mathbb{R}(A),\Vert \cdot \Vert, A)$ is a soft normed linear space.
\end{example}

\begin{definition}
\cite{DS13d,DMS15} Let $(\tilde{E},\Vert \cdot \Vert,A)$ be a soft normed linear space and $\tilde{\epsilon} \tilde{>} \bar{0}$ be a soft real number.
\begin{enumerate}[label=(\alph*)]
\item We define the followings;
\begin{align*}
B(\tilde{x},\tilde{\epsilon})&=\left\lbrace \tilde{y} \tilde{\in} \tilde{E}:\Vert \tilde{x}-\tilde{y} \Vert \tilde{<} \tilde{\epsilon} \right\rbrace \subset SE(\tilde{E}), \\
\bar{B}(\tilde{x},\tilde{\epsilon})&=\left\lbrace \tilde{y} \tilde{\in} \tilde{E}:\Vert \tilde{x}-\tilde{y} \Vert \tilde{\leq} \tilde{\epsilon} \right\rbrace \subset SE(\tilde{E}), \\
S(\tilde{x},\tilde{\epsilon})&=\left\lbrace \tilde{y} \tilde{\in} \tilde{E}:\Vert \tilde{x}-\tilde{y} \Vert = \tilde{\epsilon} \right\rbrace \subset SE(\tilde{E}).
\end{align*}
$B(\tilde{x},\tilde{\epsilon})$, $\bar{B}(\tilde{x},\tilde{\epsilon})$ and $S(\tilde{x},\tilde{\epsilon})$ are called a open ball, a closed ball and a sphere with centre at $\tilde{x}$ and radius $\tilde{\epsilon}$, respectively. $SS(B(\tilde{x},\tilde{\epsilon}))$, $SS(\bar{B}(\tilde{x},\tilde{\epsilon}))$ and $SS(S(\tilde{x},\tilde{\epsilon}))$ are called a soft open ball, a soft closed ball and a soft sphere with centre at $\tilde{x}$ and radius $\tilde{\epsilon}$, respectively.

\item Let $(F,A)$ be a soft subset in $(\tilde{E},\Vert \cdot \Vert,A)$. Then $\tilde{x}$ is said to be an interior element of $(F,A)$ if $\exists$ a positive soft real number $\tilde{\epsilon}$ such that $\tilde{x} \in B(\tilde{x},\tilde{\epsilon}) \subset SE((F,A))$. All interior elements of soft set $(F,A)$ is denoted by $Int(F,A)$ and  $SS(Int(F,A))$ is said to be the soft interior of $(F,A)$.

\item Let $\mathcal{B}$ be a non-empty collection of soft elements of $\tilde{E}$ and $(F,A)$ be a soft subset in $(\tilde{E},\Vert \cdot \Vert,A)$. Then $\mathcal{B}$ is said to be open in $(\tilde{E},\Vert \cdot \Vert,A)$ if all elements of $\mathcal{B}$ are interior elements of $\mathcal{B}$. $(F,A)$ is said to be soft open in $(\tilde{E},\Vert \cdot \Vert,A)$ if there is a collection $\mathcal{B}$ of soft elements of $(F,A)$ such that $\mathcal{B}$ is open in $(\tilde{E},\Vert \cdot \Vert,A)$ and $(F,A)=SS(\mathcal{B})$.

\item A soft set $(F,A) \in S(\tilde{E})$, is said to be soft closed in $(\tilde{E},\Vert \cdot \Vert,A)$ if its complement
$(F,A)^{c}$ is a member of $S(\tilde{E})$ and is soft open in $(\tilde{E},\Vert \cdot \Vert,A)$.

\end{enumerate}
\end{definition}

\begin{definition}
\cite{DMS15,DS13d} \leavevmode
\begin{enumerate}[label=(\alph*)]
\item A sequence $\left\lbrace \tilde{x}_{n} \right\rbrace$ of soft elements in a soft normed linear space $(\tilde{E},\Vert \cdot \Vert,A)$  is said to be convergent and converges to a soft element $\tilde{x}$ if $\Vert \tilde{x}_{n}-\tilde{x} \Vert \to \bar{0}$ as $n \to \infty$. This means for every $\tilde{\epsilon} \tilde{>} \bar{0}$, chosen arbitrarily, $\exists$ a natural number $N=N(\tilde{\epsilon})$ such that $\bar{0} \tilde{\leqslant} \Vert \tilde{x}_{n}- \tilde{x} \Vert \tilde{<} \tilde{\epsilon}$, whenever $n>N$ i.e. $n>N \Rightarrow \tilde{x}_{n} \in B(\tilde{x},\tilde{\epsilon})$ ($B(\tilde{x},\tilde{\epsilon})$ is a open ball with centre $\tilde{x}$ and radius $\tilde{\epsilon}$).

\item A sequence $\left\lbrace \tilde{x}_{n} \right\rbrace$ of soft elements in a soft normed linear space $(\tilde{E},\Vert \cdot \Vert,A)$ is said to be a Cauchy sequence in $\tilde{E}$ if corresponding to every $\tilde{\epsilon} \tilde{>} \bar{0}$, $\exists$ a natural number $N=N(\tilde{\epsilon})$ such that $\Vert \tilde{x}_{n}-\tilde{x}_{m} \Vert \tilde{\leqslant} \tilde{\epsilon}$, $\forall m,n > N$ i.e. $\Vert \tilde{x}_{n}-\tilde{x}_{m} \Vert \to \bar{0}$ as $n,m \to \infty$.

\item Let $(\tilde{E},\Vert \cdot \Vert,A)$ be a soft normed linear space. Then $\tilde{E}$ is said to be complete if every Cauchy sequence of soft elements in $\tilde{E}$ converges to a soft element of $\tilde{E}$. Every complete soft normed linear space is called a soft Banach space.
\end{enumerate}
\end{definition}

\begin{theorem}\label{Teo2.11}
\cite{DMS15,DS13d} Every Cauchy sequence in $\mathbb{R}(A)$, where $A$ is a finite set of parameters, is convergent i.e. the set of all soft real numbers with its usual modulus soft norm with to finite set of parameters, is a soft Banach space.
\end{theorem}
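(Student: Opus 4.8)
The plan is to exploit the finiteness of $A$ to reduce soft Cauchyness and soft convergence to the classical (componentwise) statements in $\mathbb{R}$, and then invoke completeness of $\mathbb{R}$. Write $A=\{\lambda_1,\dots,\lambda_k\}$ and let $\{\tilde r_n\}$ be a Cauchy sequence in $(\mathbb{R}(A),\vert\cdot\vert,A)$, so that each $\tilde r_n$ is (after the usual identification of singleton soft sets with soft elements) a map $A\to\mathbb{R}$.

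First I would show that for each fixed $\lambda\in A$ the real sequence $\{\tilde r_n(\lambda)\}_n$ is Cauchy in $\mathbb{R}$: given $\varepsilon>0$, apply the soft Cauchy hypothesis to the constant soft real number $\bar\varepsilon$ (which satisfies $\bar\varepsilon\,\tilde>\,\bar 0$) to obtain $N$ with $\vert\tilde r_n(\mu)-\tilde r_m(\mu)\vert\le\varepsilon$ for all $\mu\in A$ and all $m,n>N$; restricting to $\mu=\lambda$ gives the claim. By completeness of $\mathbb{R}$, $\tilde r_n(\lambda)\to\ell(\lambda)$ for some $\ell(\lambda)\in\mathbb{R}$. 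This defines a function $\tilde r\colon A\to\mathbb{R}$, $\lambda\mapsto\ell(\lambda)$; since $A$ is finite its range is finite, hence bounded, so $\tilde r$ is a bona fide soft real number, i.e. $\tilde r\,\tilde\in\,\mathbb{R}(A)$.

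Then I would verify that $\tilde r_n\to\tilde r$ in the soft norm. Let $\tilde\epsilon\,\tilde>\,\bar 0$; since $A$ is finite, $\delta:=\min_{1\le i\le k}\tilde\epsilon(\lambda_i)>0$. For each $i$ choose $N_i$ with $\vert\tilde r_n(\lambda_i)-\tilde r(\lambda_i)\vert<\delta$ whenever $n>N_i$, and set $N:=\max\{N_1,\dots,N_k\}$. For $n>N$ and every $\lambda\in A$ we then have $\vert\tilde r_n(\lambda)-\tilde r(\lambda)\vert<\delta\le\tilde\epsilon(\lambda)$, that is $\bar 0\,\tilde\le\,\vert\tilde r_n-\tilde r\vert\,\tilde<\,\tilde\epsilon$, which is exactly $\tilde r_n\in B(\tilde r,\tilde\epsilon)$ for all $n>N$. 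Hence $\{\tilde r_n\}$ converges in $\mathbb{R}(A)$, and since the Cauchy sequence was arbitrary, $(\mathbb{R}(A),\vert\cdot\vert,A)$ is complete, i.e. a soft Banach space.

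The only delicate point — and the place where the finiteness hypothesis is genuinely used — is the passage from the family of componentwise estimates to a single threshold $N$ valid for all parameters simultaneously: this needs both $\delta=\min_{\lambda\in A}\tilde\epsilon(\lambda)>0$ (so that one honest $\varepsilon$-bound dominates every coordinate of $\tilde\epsilon$) and the existence of $\max\{N_1,\dots,N_k\}$. For an infinite parameter set either of these can fail, so no essentially new idea is needed beyond being careful that every ``$\min$'' and ``$\max$'' taken is over a finite set.
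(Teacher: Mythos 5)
Your proof is correct: reducing soft Cauchyness and soft convergence to the componentwise statements in $\mathbb{R}$ and using the finiteness of $A$ precisely where it is needed (to get $\delta=\min_{\lambda\in A}\tilde{\epsilon}(\lambda)\tilde{>}0$ and $N=\max_i N_i$) is a complete argument, and your closing remark correctly identifies why the result fails for infinite parameter sets. Note that the paper itself gives no proof of this theorem --- it is quoted from \cite{DMS15,DS13d} --- and your componentwise reduction to the completeness of $\mathbb{R}$ is essentially the standard argument of those sources; the only superfluous step is checking boundedness of the range of the limit function, since a soft real number only requires each singleton value $\{\tilde{r}(\lambda)\}$ to be bounded, which is automatic.
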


\begin{definition}
\cite{DS13a} Let $X$ be a non-empty set and $A$ be non-empty a parameter set. A mapping $d: SE(\tilde{X}) \times SE(\tilde{X}) \rightarrow \mathbb{R}(A)^{*}$ is said to be a soft metric on the soft set $\tilde{X}$ if $d$ satisfies the following conditions:
\begin{enumerate}
\item[(M1)] $d(\tilde{x},\tilde{y}) \tilde{\geq} \bar{0}$, for all $\tilde{x},\tilde{y} \tilde{\in} \tilde{X}$.

\item[(M2)] $d(\tilde{x},\tilde{y}) = \bar{0}$ if and only if $\tilde{x}=\tilde{y}$.

\item[(M3)] $d(\tilde{x},\tilde{y})=d(\tilde{y},\tilde{x})$ for all $\tilde{x},\tilde{y} \tilde{\in} \tilde{X}$.

\item[(M4)] $d(\tilde{x},\tilde{y}) \tilde{\leq} d(\tilde{x},\tilde{z}) + d(\tilde{z},\tilde{y})$ for all $\tilde{x},\tilde{y}, \tilde{z} \tilde{\in} \tilde{X}$.
\end{enumerate}

The soft $\tilde{X}$ with a soft metric $d$ on $\tilde{X}$ is said to be a soft metric space and denoted by $(\tilde{X},d,A)$ or $(\tilde{X},d)$.
\end{definition}

\begin{proposition}
\cite{DMS15} Let $(\tilde{X},\Vert \cdot \Vert,A)$ be soft normed linear space. Let us define $d:\tilde{X}\times\tilde{X} \rightarrow \mathbb{R}(A)^{*}$ by $d(\tilde{x},\tilde{y})=\Vert \tilde{x}-\tilde{y} \Vert$ for all $\tilde{x},\tilde{y}\tilde{\in}\tilde{X}$. Then $d$ is a soft metric on $\tilde{X}$.
\end{proposition}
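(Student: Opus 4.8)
The plan is to verify the four soft metric axioms (M1)--(M4) for $d(\tilde x,\tilde y)=\Vert\tilde x-\tilde y\Vert$ one at a time, each being an immediate consequence of the corresponding soft norm axiom (N1)--(N4). The preliminary observation is that, since $\tilde X$ is an absolute soft vector space, for any two soft vectors $\tilde x,\tilde y\,\tilde\in\,\tilde X$ the expression $\tilde x-\tilde y:=\tilde x+(-\bar 1).\tilde y$ is again a soft element of $\tilde X$; hence $d(\tilde x,\tilde y)=\Vert\tilde x-\tilde y\Vert$ is well defined and, by (N1), takes values in $\mathbb{R}(A)^{*}$ with $d(\tilde x,\tilde y)\,\tilde\geq\,\bar 0$, which is (M1).

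For (M2), $d(\tilde x,\tilde y)=\bar 0$ means $\Vert\tilde x-\tilde y\Vert=\bar 0$, which by (N2) is equivalent to $\tilde x-\tilde y=\Theta$, i.e. $\tilde x(\lambda)=\tilde y(\lambda)$ for every $\lambda\in A$; by the definition of equality of soft elements this is exactly $\tilde x=\tilde y$. For (M3), I would write $\tilde x-\tilde y=(-\bar 1).(\tilde y-\tilde x)$, apply (N3) with the soft scalar $\tilde\alpha=-\bar 1$, and use $\vert-\bar 1\vert=\bar 1$ to obtain $d(\tilde x,\tilde y)=\Vert\tilde x-\tilde y\Vert=\vert-\bar 1\vert\,\Vert\tilde y-\tilde x\Vert=\Vert\tilde y-\tilde x\Vert=d(\tilde y,\tilde x)$.

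Finally, for (M4), given $\tilde x,\tilde y,\tilde z\,\tilde\in\,\tilde X$ I would use the identity $\tilde x-\tilde y=(\tilde x-\tilde z)+(\tilde z-\tilde y)$ and apply the soft triangle inequality (N4) to the soft vectors $\tilde x-\tilde z$ and $\tilde z-\tilde y$, getting $d(\tilde x,\tilde y)=\Vert(\tilde x-\tilde z)+(\tilde z-\tilde y)\Vert\,\tilde\leq\,\Vert\tilde x-\tilde z\Vert+\Vert\tilde z-\tilde y\Vert=d(\tilde x,\tilde z)+d(\tilde z,\tilde y)$.

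There is essentially no real obstacle: the statement is a soft-set transcription of the classical fact that a norm induces a metric, and every step is forced. The only points deserving a line of justification are the algebraic identities among soft vectors, which are checked parameterwise using $(\tilde u+\tilde v)(\lambda)=\tilde u(\lambda)+\tilde v(\lambda)$ and $(\tilde\alpha.\tilde u)(\lambda)=\tilde\alpha(\lambda).\tilde u(\lambda)$, together with the identity $-\bar 1.\tilde\alpha=-\tilde\alpha$ recorded in the preliminaries, which guarantees that all the differences appearing above are genuine soft elements of $\tilde X$ to which the soft norm may be applied.
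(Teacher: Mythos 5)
Your proof is correct: the axiom-by-axiom verification of (M1)--(M4) from (N1)--(N4), with the parameterwise checks of the soft vector identities, is the standard argument, and the paper itself states this proposition without proof, merely citing \cite{DMS15}, so there is nothing in the paper to diverge from. No gaps to report.
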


\begin{definition}
\cite{HZ07} \leavevmode
\begin{enumerate}[label=(\alph*)]
\item Let $E$ be a real Banach space and $P$ be a subset of $E$. $P$ is called a cone if and only if 
\begin{itemize}
\item[(1)] $P$ is closed, non-empty and $P \neq \left\lbrace \theta \right\rbrace$,

\item[(2)] $a,b \in \mathbb{R}$, $a,b \geqslant 0$, $x,y \in P$ $\Rightarrow$ $ax+by \in P$,

\item[(3)] $x \in P$ and $-x \in P $ $\Rightarrow$ $x=\theta$.
\end{itemize}

\item For a given cone $P \subseteq E$, we can define a partial ordering $\preceq$ with respect to $P$ by $x \preceq y$ if and only if $y-x \in P$. $x \prec y$ will stand for $x \preceq y$ and $x \neq y$, while $x \ll y$ will stand for $y-x \in intP$, where $intP$ denotes the interior of P.

\item The cone $P$ is called normal if there is a number $\alpha>0$ such that for all $x,y \in E$, we have $\theta \preceq x \preceq y$ implies $\Vert x \Vert \leq \alpha \Vert y \Vert$. The least positive number satisfying this inequality is called the normal constant of $P$. The cone $P$ is called regular if every increasing sequence which is bounded from above is convergent. Equivalently the cone $P$ is called regular if every decreasing sequence which is bounded from below is convergent. Regular cones are normal and there exist normal cones which are not regular.

\item Let $X$ be a non-empty set. Suppose the mapping $d: X \times X \rightarrow E$ satisfies
\begin{enumerate}
\item[(d1)] $\theta \prec d(x,y)$ for all $x,y \in X$ and $d(x,y)=\theta$ if and only if $x=y$.

\item[(d2)] $d(x,y)=d(y,x)$ for all $x,y \in X$.

\item[(d3)] $d(x,y) \preceq d(x,z)+d(y,z)$ for all $x,y,z \in X$.
\end{enumerate}
Then $d$ is called a cone metric on $X$ and $(X,d)$ is called a cone metric space.

\end{enumerate}

\end{definition}

\section{Soft Cone Metric Spaces}\label{Sec3}

\begin{definition}
Let $(\tilde{E},\Vert \cdot \Vert,A)$ be a soft real Banach space and $(P,A) \in S(\tilde{E})$ be a soft subset of $\tilde{E}$. Then $(P,A)$ is called a soft cone if and only if
\begin{enumerate}
\item $(P,A)$ is closed, $(P,A) \neq \Phi$ and $(P,A) \neq SS(\lbrace \Theta \rbrace)$,

\item $\tilde{a},\tilde{b} \in \mathbb{R}(A)^{*}$, $\tilde{x},\tilde{y} \tilde{\in} (P,A)$ implies $\tilde{a}\tilde{x}+\tilde{b}\tilde{y} \tilde{\in} (P,A)$,

\item $\tilde{x} \tilde{\in} (P,A)$ and $-\tilde{x} \tilde{\in}(P,A)$ implies $\tilde{x}=\Theta$.
\end{enumerate}
\end{definition}

Given a soft cone $(P,A) \in S(\tilde{E})$, we define a soft partial ordering $\tilde{\preceq}$ with respect to $(P,A)$ by $\tilde{x} \tilde{\preceq} \tilde{y}$ if and only if $\tilde{y}-\tilde{x} \tilde{\in} (P,A)$. We write $\tilde{x} \tilde{\prec} \tilde{y}$ to indicate that $\tilde{x} \tilde{\preceq} \tilde{y}$ but $\tilde{x} \neq \tilde{y}$, while $\tilde{x} \tilde{\ll} \tilde{y}$ will stand for $\tilde{y}-\tilde{x} \tilde{\in} Int(P,A)$, $Int(P,A)$ denotes the interior of $(P,A)$.

\begin{definition}
The soft cone $(P,A)$ in soft real Banach space $\tilde{E}$ is called
\begin{enumerate}[label=(\alph*)]
\item normal, if there is a soft real number $\tilde{\alpha}\tilde{>}\bar{0}$ such that for all $\tilde{x},\tilde{y} \in \tilde{E}$, $\Theta \tilde{\preceq} \tilde{x} \tilde{\preceq} \tilde{y}$ implies $\Vert \tilde{x} \Vert \tilde{\leq} \tilde{\alpha} \Vert \tilde{y} \Vert$, where $\tilde{\alpha}$ is called soft constant of $(P,A)$.

\item minihedral, if $sup(\tilde{x},\tilde{y})$ exists for all $\tilde{x},\tilde{y} \tilde{\in} \tilde{E}$.

\item strongly minihedral, if every soft set in $\tilde{E}$ which is bounded from above has a supremum.

\item solid, if $Int(P,A) \neq \Phi$.

\item regular, if every increasing sequence of soft elements in $\tilde{E}$ which is bounded from above is convergent. That is, if $\lbrace \tilde{x}_{n} \rbrace$ is a sequence of soft elements in $\tilde{E}$ such that $\tilde{x}_{1} \tilde{\preceq} \tilde{x}_{2} \tilde{\preceq} \cdots \tilde{\preceq} \tilde{x}_{n} \cdots $ for some soft elements in $\tilde{E}$ then there is $\tilde{x} \tilde{\in} \tilde{E}$ such that $\Vert \tilde{x}_{n} - \tilde{x} \Vert \to \bar{0}$ as $n \to \infty$. Equivalently, the soft cone $(P,A)$ is regular if and only if every decreasing sequence of soft elements in $\tilde{E}$ which is bounded from below is convergent.
\end{enumerate}
\end{definition}

\begin{example}
Let $\mathbb{R}(A)$ be all soft real number, where $A$ is a finite set of parameters. Let $\mathbb{R}^{n}(A)=\mathbb{R}(A) \times\mathbb{R}(A) \times \cdots \times\mathbb{R}(A)$. Then, $\mathbb{R}^{n}(A)$ is a soft Banach space as a result of \Cref{Teo2.11}. Let $\tilde{E}=\mathbb{R}^{n}(A)$ with $(P,A)=SS \left\lbrace (\tilde{x}_{1},\tilde{x}_{2},\dots,\tilde{x}_{n}): \tilde{x}_{i} \tilde{\geq} \bar{0}, \forall i=1,2,\dots,n \right\rbrace$. Then the soft cone $(P,A)$ is normal, minihedral, strongly minihedral and solid.
\end{example}

In the following we always suppose $(P,A)$ is a soft cone in soft Banach space $\tilde{E}$ with $Int(P,A) \neq \Phi$ and $\tilde{\preceq}$ is soft partial ordering with respect to $(P,A)$.

\begin{definition}
Let $X$ be a non-empty set and $\tilde{X}$ be absolute soft set. A mapping $d:SE(\tilde{X}) \times SE(\tilde{X}) \rightarrow SE(\tilde{E})$ is said to be a soft cone metric on $\tilde{X}$ if $d$ satisfies the following axioms:

\begin{enumerate}
\item[(d1)]  $\Theta \tilde{\prec} d(\tilde{x},\tilde{y})$ for all $\tilde{x},\tilde{y} \tilde{\in} \tilde{X}$ and $d(\tilde{x},\tilde{y})=\Theta$ if and only if $\tilde{x}=\tilde{y}$.

\item[(d2)] $d(\tilde{x},\tilde{y})=d(\tilde{y},\tilde{x})$ for all $\tilde{x},\tilde{y} \tilde{\in} \tilde{X}$.

\item[(d3)] $d(\tilde{x},\tilde{y}) \tilde{\preceq} d(\tilde{x},\tilde{z})+d(\tilde{z},\tilde{y})$ for all $\tilde{x},\tilde{y},\tilde{z} \tilde{\in} \tilde{X}$.
\end{enumerate}
Then, the soft set $\tilde{X}$ with a soft cone metric $d$ on $\tilde{X}$ is called a soft cone metric space and is denoted by $(\tilde{X},d,A)$. 
\end{definition}

It is obvious that soft cone metric spaces generalize soft metric spaces.

\begin{example}
Let $A$ be a finite set of parameters, $\tilde{E}=\mathbb{R}^{2}(A)$, $(P,A)=SS \left\lbrace (\tilde{x},\tilde{y}) \tilde{\in} \tilde{E}: \tilde{x},\tilde{y} \tilde{\geq} \bar{0} \right\rbrace$, $\tilde{X}=\mathbb{R}(A)$ and $d:\tilde{X} \times \tilde{X} \rightarrow \tilde{E}$ such that $d(\tilde{x},\tilde{y})=\left( \vert \tilde{x}-\tilde{y} \vert,\tilde{\alpha}\vert \tilde{x}-\tilde{y} \vert  \right)$, where $\tilde{\alpha}\tilde{\geq}\bar{0}$ is soft constant. Then, $(\tilde{X},d,A)$ is a soft cone metric space.
\end{example}

\begin{example}\label{Ex}
Every parametrized family of crisp cone metrics $\lbrace d_{\lambda}:\lambda \in A \rbrace$ on a crisp set $X$ can be considered as a soft cone metric on the sof set $\tilde{X}$.
\begin{proof}
Let $\tilde{x},\tilde{y}\tilde{\in}\tilde{X}$, then $\tilde{x}(\lambda),\tilde{y}(\lambda) \in X$, $\forall \lambda \in A$. Let us define a mapping $d:SE(\tilde{X}) \times SE(\tilde{X}) \rightarrow SE(\tilde{E})$ by $d(\tilde{x},\tilde{y})(\lambda)=d_{\lambda}(\tilde{x}(\lambda),\tilde{y}(\lambda))$, $\forall \lambda \in A$, $\forall \tilde{x},\tilde{y}\tilde{\in}\tilde{X}$. Then, $d$ is a soft cone metric on $\tilde{X}$. We now verify the axioms (d1)-(d3) for soft cone metric.
\begin{itemize}
\item[(d1)] We have $\theta \prec d(\tilde{x},\tilde{y})(\lambda)=d_{\lambda}\left( \tilde{x}(\lambda),\tilde{y}(\lambda) \right)$, $\forall \lambda \in A$, $\forall \tilde{x},\tilde{y}\tilde{\in}\tilde{X}$. Thus $\Theta \prec d(\tilde{x},\tilde{y})$, where $\Theta(\lambda)=\theta$, $\forall \lambda \in A$. Also, 
\begin{align*}
d(\tilde{x},\tilde{y})(\lambda)&=d_{\lambda}\left( \tilde{x}(\lambda),\tilde{y}(\lambda) \right)=\theta, \forall \lambda \in A \\
&\Leftrightarrow \Vert d_{\lambda}\left( \tilde{x}(\lambda),\tilde{y}(\lambda) \right) \Vert =\Vert \theta \Vert \\
&\Leftrightarrow  \Vert d_{\lambda}\left( \tilde{x}(\lambda),\tilde{y}(\lambda) \right) \Vert =0 \\
&\Leftrightarrow \tilde{x}(\lambda)=\tilde{y}(\lambda),  \forall \lambda \in A \\
&\Leftrightarrow \tilde{x}=\tilde{y}.
\end{align*}

\item[(d2)] $d(\tilde{x},\tilde{y})(\lambda)=d_{\lambda}\left( \tilde{x}(\lambda),\tilde{y}(\lambda) \right)=d_{\lambda}\left( \tilde{y}(\lambda),\tilde{x}(\lambda) \right)=d(\tilde{x},\tilde{y})(\lambda)$, $\forall \lambda \in A \Rightarrow d(\tilde{x},\tilde{y})=d(\tilde{y},\tilde{x})$, $\forall \tilde{x},\tilde{y}\tilde{\in}\tilde{X}$.

\item[(d3)] For all $\tilde{x},\tilde{y},\tilde{z}\tilde{\in}\tilde{X}$,
\begin{align*}
\left[ d(\tilde{x},\tilde{z})+d(\tilde{z},\tilde{y}) \right] (\lambda)&=d\left( \tilde{x},\tilde{z} \right)(\lambda)+d\left( \tilde{z},\tilde{y} \right)(\lambda) \\
&= d_{\lambda}\left( \tilde{x}(\lambda),\tilde{z}(\lambda) \right)+d_{\lambda}\left( \tilde{z}(\lambda),\tilde{y}(\lambda) \right) \\
&\geq d_{\lambda}\left( \tilde{x}(\lambda),\tilde{y}(\lambda) \right) \\
&=d\left( \tilde{x},\tilde{y} \right)(\lambda), \forall \lambda \in A.
\end{align*}
So, $d\left( \tilde{x},\tilde{y} \right) \preceq d\left( \tilde{x},\tilde{z} \right)+d\left( \tilde{z},\tilde{y} \right)$.
\end{itemize}
Thus, $d$ is soft cone metric on $\tilde{X}$.
\end{proof}
\end{example}

\begin{example}
Every crisp cone metric $\rho$ on a crisp set $X$ can be extended to a soft cone metric on $\tilde{X}$.
\begin{proof}
First we construct the absolute soft set and the soft Banach space $\tilde{E}$ using the non-empty set of parameters $A$. Let us define  a mapping $d:SE(\tilde{X}) \times SE(\tilde{X}) \rightarrow SE(\tilde{E})$ by $d(\tilde{x},\tilde{y})(\lambda)=\rho(\tilde{x}(\lambda),\tilde{y}(\lambda))$, $\forall \lambda \in A$, $\forall \tilde{x},\tilde{y}\tilde{\in}\tilde{X}$.
Then, using the same procedure as \Cref{Ex}, it can be easily prove that $d$ is a soft cone metric on $\tilde{X}$.
\end{proof}
\end{example}

The soft cone metric defined using the crisp cone metric $\rho$ is called the soft cone metric generated by $\rho$.

\begin{remark}
The converse of the \Cref{Ex} is not true. Though every parametrized family of crisp cone metrics can be considered as a soft cone metric but any soft cone metric is not just a parametrized family of crisp cone metrics. Thus, the soft cone metric is more general and comprehensive than any parametrized family of the crisp cone metrics.
\end{remark}

\begin{theorem}
If a soft cone metric space $d$ on $\tilde{X}$ satisfies the following axiom
\begin{enumerate}
\item[(d4)] \label{(d4)} For $(r,s)\in X \times X$ and $\lambda \in A$, $\left\lbrace d(\tilde{x},\tilde{y})(\lambda):\tilde{x}(\lambda)=r,\tilde{y}(\lambda)=s \right\rbrace$ is a singleton set and if for $\lambda \in A$, $d_{\lambda}:X \times X \rightarrow E$ is defined by $d_{\lambda}\left( \tilde{x}(\lambda),\tilde{y}(\lambda) \right)=d(\tilde{x},\tilde{y})(\lambda)$, $\tilde{x},\tilde{y} \in \tilde{X}$. 
\end{enumerate}
then $d_{\lambda}$ is a cone metric on $X$.
\end{theorem}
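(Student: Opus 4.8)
The plan is to fix a parameter $\lambda\in A$ and transfer each soft axiom (d1)--(d3) to the $\lambda$-th coordinate, using that $\tilde{X}$ is the absolute soft set: every function $A\to X$ is a soft element of $\tilde{X}$, so for any $r,s\in X$ there are soft elements $\tilde{x},\tilde{y}\,\tilde{\in}\,\tilde{X}$ with $\tilde{x}(\lambda)=r$, $\tilde{y}(\lambda)=s$, and we may even prescribe their values at the other parameters freely. Axiom (d4) is precisely what makes $d_{\lambda}(r,s):=d(\tilde{x},\tilde{y})(\lambda)$ independent of such a choice, so $d_{\lambda}:X\times X\to E$ is a well-defined map. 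As ambient cone for $d_{\lambda}$ I would take $P(\lambda)$: that $\theta\in P(\lambda)$ and that $P(\lambda)$ is closed under nonnegative linear combinations follows by applying soft-cone axiom~(2) with the constant soft real numbers $\bar{a},\bar{b}$, $a,b\ge 0$, and soft elements of $(P,A)$ realizing the prescribed values at $\lambda$ (note $\bar{0}\cdot\tilde{x}=\Theta\,\tilde{\in}\,(P,A)$, so $\theta\in P(\mu)$ for all $\mu$); pointedness follows from soft-cone axiom~(3) applied to the soft element $\tilde{x}$ with $\tilde{x}(\lambda)=x$ and $\tilde{x}(\mu)=\theta$ otherwise, whose negative also belongs to $(P,A)$. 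The only nonformal point in ``$P(\lambda)$ is a cone'' is its closedness, which I would record separately, since it must be teased out of the soft-closedness of $(P,A)$ through the description of soft open sets by their interior soft elements.

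With the cone $P(\lambda)$ fixed, two of the three metric axioms are immediate. For symmetry, pick $\tilde{x},\tilde{y}$ with $\tilde{x}(\lambda)=r$, $\tilde{y}(\lambda)=s$; then $d_{\lambda}(r,s)=d(\tilde{x},\tilde{y})(\lambda)=d(\tilde{y},\tilde{x})(\lambda)=d_{\lambda}(s,r)$ by the soft axiom~(d2). For the triangle inequality, pick $\tilde{x},\tilde{y},\tilde{z}$ with values $r,s,t$ at $\lambda$; the soft axiom~(d3) gives $d(\tilde{x},\tilde{z})+d(\tilde{z},\tilde{y})-d(\tilde{x},\tilde{y})\,\tilde{\in}\,(P,A)$, and reading this off at $\lambda$ yields $d_{\lambda}(r,t)+d_{\lambda}(t,s)-d_{\lambda}(r,s)\in P(\lambda)$, i.e. $d_{\lambda}(r,s)\preceq d_{\lambda}(r,t)+d_{\lambda}(t,s)$. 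For the easy half of (d1): taking $\tilde{y}=\tilde{x}$ gives $d_{\lambda}(r,r)=d(\tilde{x},\tilde{x})(\lambda)=\theta$, and evaluating $\Theta\,\tilde{\preceq}\,d(\tilde{x},\tilde{y})$ at $\lambda$ gives $\theta\preceq d_{\lambda}(r,s)$ for all $r,s$ (I read the nonnegativity part of (d1) with $\tilde{\preceq}$, since the literal ``$\tilde{\prec}$'' there is incompatible with $d(\tilde{x},\tilde{x})=\Theta$).

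I expect the converse half of (d1) --- that $d_{\lambda}(r,s)=\theta$ forces $r=s$ --- to be the main obstacle, since testing with constant soft elements is circular. The way around it is to exploit the freedom in choosing values off $\lambda$: take $\tilde{x},\tilde{y}\,\tilde{\in}\,\tilde{X}$ with $\tilde{x}(\mu)=\tilde{y}(\mu)$ for every $\mu\neq\lambda$ while $\tilde{x}(\lambda)=r$, $\tilde{y}(\lambda)=s$. Then $d(\tilde{x},\tilde{y})(\mu)=d_{\mu}(\tilde{x}(\mu),\tilde{x}(\mu))=\theta$ for $\mu\neq\lambda$ by the (already proved) easy half of (d1) at $\mu$, and $d(\tilde{x},\tilde{y})(\lambda)=d_{\lambda}(r,s)=\theta$ by hypothesis, so $d(\tilde{x},\tilde{y})=\Theta$; the soft axiom~(d1) then forces $\tilde{x}=\tilde{y}$, hence $r=\tilde{x}(\lambda)=\tilde{y}(\lambda)=s$. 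Combining this with the previous paragraph and the verification that $P(\lambda)$ is a cone shows that $d_{\lambda}$ satisfies (d1)--(d3), so it is a cone metric on $X$.
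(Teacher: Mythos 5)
Your proposal is correct, and its overall strategy is the same one the paper uses: fix $\lambda$, let (d4) make $d_{\lambda}$ well defined, and read each soft axiom off at that parameter. The difference is one of substance supplied rather than of route. The paper's proof is essentially a two-sentence assertion that well-definedness follows from (d4) and that ``the soft cone metric axioms give the cone metric conditions of $d_{\lambda}$''; it never addresses the one step that is not a literal coordinate-wise restatement, namely that $d_{\lambda}(r,s)=\theta$ forces $r=s$ (the soft axiom (d1) only applies when \emph{all} coordinates of $d(\tilde{x},\tilde{y})$ vanish). Your device of choosing $\tilde{x},\tilde{y}$ that agree at every $\mu\neq\lambda$, using (d4) together with $d(\tilde{z},\tilde{z})=\Theta$ to kill the other coordinates, and only then invoking the soft (d1), is exactly the argument needed there, and it does not appear in the paper; your reading of the nonnegativity part of (d1) with $\tilde{\preceq}$ is also the right repair of a sloppiness the paper inherits from the crisp definition. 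On the cone itself: the paper never says with respect to which crisp cone $d_{\lambda}$ is a cone metric, so your choice of $P(\lambda)$, with pointedness and stability under nonnegative combinations checked via constant soft scalars, is a sensible completion; but the closedness of $P(\lambda)$ in $E$, which you flag and defer, is genuinely not automatic from the Das--Samanta notion of soft closedness of $(P,A)$, and it is left unaddressed in the paper as well, so a fully complete write-up should either prove it or state it as a hypothesis.
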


\begin{proof}
Clearly $d_{\lambda}:X \times X \rightarrow E$ is rule that assigns an ordered pair of $X$ to a crisp element $e \in E$ with $\theta \preceq e$, $\forall \lambda \in A$. Now the well defined property of $d_{\lambda}$, $\forall \lambda \in A$, follows from the axiom (d4) and the soft cone metric axioms gives the cone metric conditions of $d_{\lambda}$, $\forall \lambda \in A$. Thus the soft cone metric satisfying (d4) gives a parametrized family of crisp cone metrics. With this point of view, it also follows that a soft cone metric, satisfying (d4) is a particular soft cone mapping as defined in \cite{MS10}, where $d:A \rightarrow (E^{*})^{X \times X}$ ($E^{*}$ denotes the set of all $e \in E$ with $\theta \preceq e$).
\end{proof}

\begin{definition}
Let $(\tilde{X},d,A)$ be soft cone metric space. Let $\lbrace \tilde{x}_{n} \rbrace$ be a sequence of soft elements in $\tilde{X}$ and $\tilde{x}\tilde{\in}\tilde{X}$. If for all $\tilde{c}\tilde{\in}\tilde{E}$ with $\Theta\tilde{\ll}\tilde{c}$ there is a natural number $N$ such that for all $ n > N$,  $d(\tilde{x}_{n},\tilde{x}) \tilde{\ll} \tilde{c}$, then $\lbrace \tilde{x}_{n} \rbrace$ is said to be convergent and $\lbrace \tilde{x}_{n} \rbrace$ converges to $\tilde{x}$ and $\tilde{x}$ is the limit of $\lbrace \tilde{x}_{n} \rbrace$. We denote this by $\displaystyle{\lim_{n \to \infty}\tilde{x}_{n}}=\tilde{x}$ or $\tilde{x}_{n} \to \tilde{x}$ as $n \to \infty$.
\end{definition}

\begin{theorem}
Let $(\tilde{X},d,A)$ a soft cone metric space and $(P,A)$ be a normal soft cone with normal soft constant $\tilde{\alpha}$. Let $\lbrace \tilde{x}_{n} \rbrace$ be a sequence of soft elements in $\tilde{X}$. Then, $\lbrace \tilde{x}_{n} \rbrace$ converges to $\tilde{x}$ if and only if $d(\tilde{x}_{n},\tilde{x}) \to \Theta$ as $n \to \infty$.
\end{theorem}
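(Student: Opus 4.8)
The plan is to transcribe the classical Huang--Zhang argument for (crisp) solid cone metric spaces into the soft setting, carrying every estimate out parameterwise. I would split the statement into its two implications, and isolate beforehand the one genuinely structural fact I need about solid soft cones: \emph{if $\tilde{c}_0 \tilde{\in} Int(P,A)$ and $\tilde{t}$ is a positive soft real number, then $\tilde{t}\tilde{c}_0 \tilde{\in} Int(P,A)$.} This follows by unwinding the definition of the soft open ball: since multiplication by the positive (hence invertible, parameterwise nonzero) soft scalar $\tilde{t}$ satisfies $\Vert \tilde{t}\tilde{z}\Vert = \tilde{t}\Vert \tilde{z}\Vert$ by (N3), one checks $B(\tilde{t}\tilde{c}_0,\tilde{t}\tilde{\delta}) = \{\tilde{t}\tilde{z}:\tilde{z}\tilde{\in} B(\tilde{c}_0,\tilde{\delta})\}$, and if $B(\tilde{c}_0,\tilde{\delta})\subset SE((P,A))$ then the right-hand side lies in $SE((P,A))$ by soft cone axiom (2) (as $\tilde{t}\in\mathbb{R}(A)^{*}$). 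Thus $\tilde{t}\tilde{c}_0$ is an interior element.

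For the ``only if'' part, assume $\tilde{x}_n \to \tilde{x}$ and fix an arbitrary $\tilde{\epsilon}\tilde{>}\bar{0}$; the goal is an $N$ with $\Vert d(\tilde{x}_n,\tilde{x})\Vert \tilde{<}\tilde{\epsilon}$ for $n>N$. Since $Int(P,A)\neq\Phi$, pick $\tilde{c}_0\tilde{\in} Int(P,A)$, and choose a positive soft real $\tilde{t}$ so small that $\tilde{\alpha}\tilde{t}\Vert\tilde{c}_0\Vert\tilde{<}\tilde{\epsilon}$ --- defined parameterwise, with $\tilde{t}(\lambda)$ an arbitrary positive value where $\Vert\tilde{c}_0\Vert(\lambda)=0$ and $\tilde{t}(\lambda)<\tilde{\epsilon}(\lambda)/(\tilde{\alpha}(\lambda)\Vert\tilde{c}_0\Vert(\lambda))$ otherwise. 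Put $\tilde{c}=\tilde{t}\tilde{c}_0$; by the preliminary fact $\Theta\tilde{\ll}\tilde{c}$, so by the definition of convergence there is $N$ with $d(\tilde{x}_n,\tilde{x})\tilde{\ll}\tilde{c}$ for $n>N$. Combining $\tilde{c}-d(\tilde{x}_n,\tilde{x})\tilde{\in} Int(P,A)\subset SE((P,A))$ with $\Theta\tilde{\preceq}d(\tilde{x}_n,\tilde{x})$ from (d1) gives $\Theta\tilde{\preceq}d(\tilde{x}_n,\tilde{x})\tilde{\preceq}\tilde{c}$, and normality of $(P,A)$ yields $\Vert d(\tilde{x}_n,\tilde{x})\Vert\tilde{\leq}\tilde{\alpha}\Vert\tilde{c}\Vert=\tilde{\alpha}\tilde{t}\Vert\tilde{c}_0\Vert\tilde{<}\tilde{\epsilon}$. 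Hence $\Vert d(\tilde{x}_n,\tilde{x})\Vert\to\bar{0}$, i.e.\ $d(\tilde{x}_n,\tilde{x})\to\Theta$.

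For the ``if'' part, assume $d(\tilde{x}_n,\tilde{x})\to\Theta$, i.e.\ $\Vert d(\tilde{x}_n,\tilde{x})\Vert\to\bar{0}$, and fix $\tilde{c}\tilde{\in}\tilde{E}$ with $\Theta\tilde{\ll}\tilde{c}$, that is $\tilde{c}\tilde{\in} Int(P,A)$. Then there is a positive soft real $\tilde{\delta}$ with $B(\tilde{c},\tilde{\delta})\subset SE((P,A))$. Choose $N$ so that $\Vert d(\tilde{x}_n,\tilde{x})\Vert\tilde{<}\tilde{\delta}$ for $n>N$; then $\Vert\tilde{c}-(\tilde{c}-d(\tilde{x}_n,\tilde{x}))\Vert=\Vert d(\tilde{x}_n,\tilde{x})\Vert\tilde{<}\tilde{\delta}$, so $\tilde{c}-d(\tilde{x}_n,\tilde{x})\tilde{\in} B(\tilde{c},\tilde{\delta})$, and since every element of the ball $B(\tilde{c},\tilde{\delta})$ is itself an interior element of $(P,A)$ (a smaller ball around it stays inside $B(\tilde{c},\tilde{\delta})\subset SE((P,A))$), we get $d(\tilde{x}_n,\tilde{x})\tilde{\ll}\tilde{c}$. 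As $\tilde{c}$ was arbitrary, $\tilde{x}_n\to\tilde{x}$.

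The two convergence bookkeeping steps are routine; the only points needing real care are the ``soft solid cone'' observations used above --- invariance of $Int(P,A)$ under positive soft-scalar multiplication (to shrink $\tilde{c}_0$ in the first implication) and the fact that an interior element carries a soft ball whose every point is again interior (used in the second). I expect these to be the main, though still mild, obstacle, since they require unfolding the definitions of soft open ball and soft interior parameterwise; I would either record them as a short preliminary lemma or verify them inline. A minor extra subtlety, already handled above, is that $\Vert\tilde{c}_0\Vert(\lambda)$ may vanish for some $\lambda$, which the parameterwise definition of the scaling factor $\tilde{t}$ accommodates.
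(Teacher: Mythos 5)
Your proposal is correct and follows essentially the same route as the paper: the ``only if'' direction via normality applied to $\Theta\tilde{\preceq}d(\tilde{x}_{n},\tilde{x})\tilde{\preceq}\tilde{c}$ with $\tilde{\alpha}\Vert\tilde{c}\Vert\tilde{<}\tilde{\epsilon}$, and the ``if'' direction via a soft ball around the interior point $\tilde{c}$, exactly as in the Huang--Zhang argument the paper transcribes. The only difference is that you explicitly verify the two auxiliary facts (existence of a suitably small $\tilde{c}$ with $\Theta\tilde{\ll}\tilde{c}$ via scaling of interior elements, and that $\Vert\tilde{x}\Vert\tilde{<}\tilde{\delta}$ forces $\tilde{c}-\tilde{x}\tilde{\in}Int(P,A)$) which the paper simply asserts.
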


\begin{proof}
Suppose that $\lbrace \tilde{x}_{n} \rbrace$ converges $\tilde{x}$. For every soft real $\tilde{\epsilon}\tilde{>}\bar{0}$, choose $\tilde{c}\tilde{\in}\tilde{E}$ with $\Theta\tilde{\ll}\tilde{c}$ and $\tilde{\alpha}\Vert \tilde{c} \Vert \tilde{<} \tilde{\epsilon}$. Then, there is a natural number $N$ such that for all $n>N$, $d(\tilde{x}_{n},\tilde{x})\tilde{\ll}\tilde{c}$. So that when $n>N$, $\Vert d(\tilde{x}_{n},\tilde{x}) \Vert \tilde{\leq} \tilde{\alpha}\Vert \tilde{c} \Vert \tilde{<} \tilde{\epsilon}$. This means $d(\tilde{x}_{n},\tilde{x}) \to \Theta$ as $n \to \infty$. Conversely, suppose that $d(\tilde{x}_{n},\tilde{x}) \to \Theta$ as $n \to \infty$. For $\tilde{c}\tilde{\in}\tilde{E}$ with $\Theta\tilde{\ll}\tilde{c}$, there is $\tilde{\delta}\tilde{>}\bar{0}$ such that $\Vert \tilde{x} \Vert \tilde{<} \tilde{\delta}$ implies $\tilde{c}-\tilde{x}\tilde{\in} Int(P,A)$. For this $\tilde{\delta}$, there is a natural number $N$ such that for all $n>N$, $\Vert d(\tilde{x}_{n},\tilde{x}) \Vert \tilde{<}\tilde{\delta}$. This means $d(\tilde{x}_{n},\tilde{x})  \tilde{\ll}\tilde{c}$. Therefore, $\lbrace \tilde{x}_{n} \rbrace$ converges to $\tilde{x}$.
\end{proof}

\begin{theorem}
Let $(\tilde{X},d,A)$ a soft cone metric space and $(P,A)$ be a normal soft cone with normal soft constant $\tilde{\alpha}$. Let $\lbrace \tilde{x}_{n} \rbrace$ be a convergent sequence of soft elements in $\tilde{X}$. Then, the limit of $\lbrace \tilde{x}_{n} \rbrace$ is unique.
\end{theorem}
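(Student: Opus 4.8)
\emph{Proof proposal.} The plan is to suppose that $\lbrace \tilde{x}_{n} \rbrace$ converges to two soft elements $\tilde{x},\tilde{y}\tilde{\in}\tilde{X}$ and to deduce that $d(\tilde{x},\tilde{y})=\Theta$, which by axiom (d1) forces $\tilde{x}=\tilde{y}$. The engine of the argument is the preceding theorem, which, under the standing normality hypothesis, converts convergence in the soft cone metric into convergence of soft real numbers: from $\tilde{x}_{n}\to\tilde{x}$ we obtain $d(\tilde{x}_{n},\tilde{x})\to\Theta$, i.e. $\Vert d(\tilde{x}_{n},\tilde{x})\Vert\to\bar{0}$ in $\mathbb{R}(A)$, and likewise $\Vert d(\tilde{x}_{n},\tilde{y})\Vert\to\bar{0}$ as $n\to\infty$.

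Next I would combine the triangle axiom (d3) with symmetry (d2) to obtain, for every $n$,
\begin{align*}
\Theta\,\tilde{\preceq}\,d(\tilde{x},\tilde{y})\,\tilde{\preceq}\,d(\tilde{x},\tilde{x}_{n})+d(\tilde{x}_{n},\tilde{y}),
\end{align*}
the first inequality being (d1). Applying normality of $(P,A)$ with normal soft constant $\tilde{\alpha}$ to this chain yields $\Vert d(\tilde{x},\tilde{y})\Vert\,\tilde{\leq}\,\tilde{\alpha}\Vert d(\tilde{x},\tilde{x}_{n})+d(\tilde{x}_{n},\tilde{y})\Vert$, and the soft norm triangle inequality (N4) then bounds the right-hand side by $\tilde{\alpha}\bigl(\Vert d(\tilde{x},\tilde{x}_{n})\Vert+\Vert d(\tilde{x}_{n},\tilde{y})\Vert\bigr)$.

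Finally I would let $n\to\infty$. The soft real number $\Vert d(\tilde{x},\tilde{y})\Vert$ does not depend on $n$, while the right-hand side tends to $\bar{0}$ by the first step; hence $\Vert d(\tilde{x},\tilde{y})\Vert\,\tilde{\leq}\,\bar{0}$, and since $\Vert d(\tilde{x},\tilde{y})\Vert\,\tilde{\geq}\,\bar{0}$ by (N1) we get $\Vert d(\tilde{x},\tilde{y})\Vert=\bar{0}$. By the soft norm axiom (N2) this forces $d(\tilde{x},\tilde{y})=\Theta$, and one more appeal to (d1) gives $\tilde{x}=\tilde{y}$.

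The only points needing a word of justification are routine: that the normality inequality is genuinely applicable here (both $d(\tilde{x},\tilde{y})$ and $d(\tilde{x},\tilde{x}_{n})+d(\tilde{x}_{n},\tilde{y})$ are soft elements of $\tilde{E}$ comparable in the required order), and that an inequality $\Vert d(\tilde{x},\tilde{y})\Vert\,\tilde{\leq}\,\tilde{s}_{n}$ holding for all $n$ with $\tilde{s}_{n}\to\bar{0}$ forces $\Vert d(\tilde{x},\tilde{y})\Vert\,\tilde{\leq}\,\bar{0}$, which is immediate since $A$ is finite and $\tilde{\leq}$ on $\mathbb{R}(A)$ is the coordinatewise order, stable under limits. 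If one wanted instead to avoid the preceding theorem and work straight from the definition of convergence, the one real obstacle would be passing from ``$d(\tilde{x},\tilde{y})\,\tilde{\ll}\,\tilde{c}$ for every $\tilde{c}$ with $\Theta\,\tilde{\ll}\,\tilde{c}$'' to $d(\tilde{x},\tilde{y})=\Theta$, which needs $Int(P,A)$ to be stable under multiplication by positive soft scalars together with closedness of $(P,A)$; since normality is already assumed, the norm-based route above sidesteps this entirely.
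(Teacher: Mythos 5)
Your proof is correct, but it takes a genuinely different route from the paper's. The paper argues by contradiction and works at a single parameter: assuming $\tilde{x}\neq\tilde{y}$, it picks $\lambda\in A$ with $d(\tilde{x},\tilde{y})(\lambda)\neq\theta$, chooses $c_{\lambda}$ with $\theta\ll c_{\lambda}$ and $0<\Vert c_{\lambda}\Vert<\frac{1}{2}\Vert d(\tilde{x},\tilde{y})(\lambda)\Vert$, forms $\tilde{c}$ from it, applies the definition of convergence of $\lbrace\tilde{x}_{n}\rbrace$ to both limits to get $d(\tilde{x},\tilde{y})(\lambda)\ll 2c_{\lambda}$, and then uses normality pointwise to force $\Vert c_{\lambda}\Vert\geq\frac{1}{2\tilde{\alpha}(\lambda)}\Vert d(\tilde{x},\tilde{y})(\lambda)\Vert$, which contradicts the arbitrariness of $c_{\lambda}$. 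You instead argue directly at the level of soft elements: you invoke the preceding theorem (under normality, $\tilde{x}_{n}\to\tilde{x}$ is equivalent to $d(\tilde{x}_{n},\tilde{x})\to\Theta$, i.e.\ $\Vert d(\tilde{x}_{n},\tilde{x})\Vert\to\bar{0}$), sandwich $\Theta\,\tilde{\preceq}\,d(\tilde{x},\tilde{y})\,\tilde{\preceq}\,d(\tilde{x},\tilde{x}_{n})+d(\tilde{x}_{n},\tilde{y})$, apply normality together with (N4), and pass to the limit to conclude $\Vert d(\tilde{x},\tilde{y})\Vert=\bar{0}$, hence $d(\tilde{x},\tilde{y})=\Theta$. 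Your route is contradiction-free, reuses the equivalence theorem rather than re-running the definition of convergence, and sidesteps the slightly delicate step in the paper where the chosen bound on $\Vert c_{\lambda}\Vert$ clashes with the derived one only via the appeal to arbitrariness of $c_{\lambda}$ (since $\tilde{\alpha}(\lambda)$ may exceed $1$); the paper's route, in exchange, needs only the definition of convergence and works parameterwise. One small remark: your aside that $A$ is finite is not part of the hypotheses and is not needed — the coordinatewise order on $\mathbb{R}(A)$ passes to limits for an arbitrary parameter set, so your limit step stands as is.
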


\begin{proof}
If possible let there exist a sequence $\lbrace \tilde{x}_{n} \rbrace$  of soft elements in $\tilde{X}$ such that $\tilde{x}_{n} \to \tilde{x}$ as $n \to \infty$ and $\tilde{x}_{n} \to \tilde{y}$ as $n \to \infty$, where $\tilde{x},\tilde{y}\tilde{\in}\tilde{X}$ with $\tilde{x} \neq \tilde{y}$. Then there is at least one $\lambda \in A$ such that $d(\tilde{x},\tilde{y})(\lambda) \neq \theta$. We consider a element $c_{\lambda}$ of crisp Banach space $E$ with $\theta \ll c_{\lambda}$ satisfying $0<\Vert c_{\lambda} \Vert < \frac{1}{2}\Vert
d(\tilde{x},\tilde{y})(\lambda) \Vert$. Let $\tilde{c}\tilde{\in}\tilde{E}$ with $\tilde{c}(\lambda)=c_{\lambda}$, $\forall \lambda \in A$. Since $\tilde{x}_{n} \to \tilde{x}$, $\tilde{x}_{n} \to \tilde{y}$, there is a natural number $N$ such that for all $n>N$, $d(\tilde{x}_{n},\tilde{x})\tilde{\ll}\tilde{c} \Rightarrow d(\tilde{x}_{n},\tilde{x})(\lambda) \ll c_{\lambda}$ and $d(\tilde{x}_{n},\tilde{y})\tilde{\ll}\tilde{c} \Rightarrow d(\tilde{x}_{n},\tilde{y})(\lambda) \ll c_{\lambda}$, in particular. We have 
\begin{equation*}
d(\tilde{x},\tilde{y})(\lambda) \preceq d(\tilde{x}_{n},\tilde{x})(\lambda)+d(\tilde{x}_{n},\tilde{y})(\lambda) \ll 2c_{\lambda}.
\end{equation*}
Hence $\Vert d(\tilde{x},\tilde{y})(\lambda) \Vert \leq 2\tilde{\alpha}(\lambda) \Vert c_{\lambda} \Vert$ or $\Vert c_{\lambda} \Vert > \frac{1}{2\tilde{\alpha}(\lambda)}\Vert d(\tilde{x},\tilde{y})(\lambda) \Vert$. This is contradicted by the fact that $c_{\lambda}$ is arbitrary and therefore $\tilde{c}$ is arbitrary. Thus the result follows.
\end{proof}

\begin{definition}
Let $(\tilde{X},d,A)$ a soft cone metric space and $\lbrace \tilde{x}_{n} \rbrace$ be a sequence of soft elements in $\tilde{X}$. If for any $\tilde{c}\tilde{\in}\tilde{E}$ with $\Theta \tilde{\ll}\tilde{c}$, there is a natural number $N$ such that for all $n,m>N$, $d(\tilde{x}_{n},\tilde{x}_{m})\tilde{\ll}\tilde{c}$, then $\lbrace \tilde{x}_{n} \rbrace$ is called a Cauchy sequence in $\tilde{X}$.
\end{definition}

\begin{definition}
Let $(\tilde{X},d,A)$ a soft cone metric space. If every Cauchy sequence of soft elements in $\tilde{X}$ is convergent in $\tilde{X}$, then $(\tilde{X},d,A)$ is called complete soft cone metric space.
\end{definition}

\begin{theorem}
Let $(\tilde{X},d,A)$ a soft cone metric space. Then every convergent sequence of soft elements in $\tilde{X}$ is a Cauchy sequence.
\end{theorem}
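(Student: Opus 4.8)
The plan is to transcribe the classical proof that in a (crisp) cone metric space every convergent sequence is Cauchy, checking that each step still makes sense soft-elementwise. Let $\{\tilde{x}_n\}$ be a sequence of soft elements of $\tilde{X}$ converging to some $\tilde{x}\tilde{\in}\tilde{X}$, and fix an arbitrary $\tilde{c}\tilde{\in}\tilde{E}$ with $\Theta\tilde{\ll}\tilde{c}$. I want to produce a natural number $N$ so that $d(\tilde{x}_n,\tilde{x}_m)\tilde{\ll}\tilde{c}$ whenever $n,m>N$. The idea is to feed the ``half'' soft vector $\tilde{c}/\bar{2}$ (here $\bar{2}$ is the constant soft real number $2$) into the convergence hypothesis, and then close the gap with the triangle axiom (d3) and the symmetry axiom (d2).

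Before that I would record three elementary facts about the solid soft cone $(P,A)$, each proved by the standard remark that, on $SE(\tilde{E})$ with the soft-norm topology, translation by a fixed soft vector and multiplication by a fixed positive soft scalar are homeomorphisms, so they carry $Int(P,A)$ onto a translate, respectively a dilate, of itself: (i) if $\Theta\tilde{\ll}\tilde{c}$ then $\Theta\tilde{\ll}\tilde{c}/\bar{2}$, i.e. a positive multiple of an interior soft element is again interior; (ii) $Int(P,A)+(P,A)\subseteq Int(P,A)$, whence $\tilde{u}\tilde{\ll}\tilde{v}$ and $\tilde{w}\tilde{\ll}\tilde{z}$ give $\tilde{u}+\tilde{w}\tilde{\ll}\tilde{v}+\tilde{z}$; and (iii) if $\tilde{a}\tilde{\preceq}\tilde{b}$ and $\tilde{b}\tilde{\ll}\tilde{c}$ then $\tilde{a}\tilde{\ll}\tilde{c}$, obtained by writing $\tilde{c}-\tilde{a}=(\tilde{c}-\tilde{b})+(\tilde{b}-\tilde{a})$ and invoking (ii). These are the soft analogues of the lemmas used tacitly throughout the crisp theory; the preceding proofs in this section already use (iii) implicitly.

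With these in hand the argument is short. By (i) we have $\Theta\tilde{\ll}\tilde{c}/\bar{2}$, so by the definition of convergence there is $N$ with $d(\tilde{x}_n,\tilde{x})\tilde{\ll}\tilde{c}/\bar{2}$ for every $n>N$. Then, for $n,m>N$, using (d3) and (d2),
\[
d(\tilde{x}_n,\tilde{x}_m)\,\tilde{\preceq}\,d(\tilde{x}_n,\tilde{x})+d(\tilde{x},\tilde{x}_m)=d(\tilde{x}_n,\tilde{x})+d(\tilde{x}_m,\tilde{x})\,\tilde{\ll}\,\tilde{c}/\bar{2}+\tilde{c}/\bar{2}=\tilde{c},
\]
where the strict step uses (ii) and the passage from ``$\tilde{\preceq}$ then $\tilde{\ll}$'' to ``$\tilde{\ll}$'' uses (iii). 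Hence $\{\tilde{x}_n\}$ is a Cauchy sequence. I expect the only genuine work to be the verification of (i)--(iii): each reduces to the fact that $\tilde{y}\mapsto\tilde{\alpha}\tilde{y}$ (for fixed $\tilde{\alpha}\tilde{>}\bar{0}$) and $\tilde{y}\mapsto\tilde{y}+\tilde{a}$ are soft-norm homeomorphisms of $SE(\tilde{E})$, which rests on the soft-norm axioms (N3)--(N4); once those are granted, the rest of the proof is the one-line display above.
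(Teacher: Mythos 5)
Your proposal is correct and follows essentially the same route as the paper's proof: apply the convergence hypothesis with $\tilde{c}/\bar{2}$, then use the triangle and symmetry axioms to bound $d(\tilde{x}_n,\tilde{x}_m)$. The only difference is that you make explicit the solid-cone facts (interiority of positive multiples, $Int(P,A)+(P,A)\subseteq Int(P,A)$, and the passage from $\tilde{\preceq}$ followed by $\tilde{\ll}$ to $\tilde{\ll}$) that the paper uses tacitly, and your conclusion $d(\tilde{x}_n,\tilde{x}_m)\tilde{\ll}\tilde{c}$ is in fact cleaner than the paper's stated bound $\tilde{\ll}2\tilde{c}$.
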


\begin{proof}
Let $\lbrace \tilde{x}_{n} \rbrace$ be a sequence of soft elements in $\tilde{X}$ and $\tilde{x}_{n} \to \tilde{x}$ as $n \to \infty$. Then, for any $\tilde{c}\tilde{\in}\tilde{E}$ with $\Theta \tilde{\ll} \tilde{c}$ there is a natural number $N$ such that for $n,m>N$, $d(\tilde{x}_{n},\tilde{x})\tilde{\ll}\tilde{\frac{c}{2}}$ and $d(\tilde{x}_{m},\tilde{x})\tilde{\ll}\tilde{\frac{c}{2}}$. Hence
\begin{equation*}
d(\tilde{x}_{n},\tilde{x}_{m}) \tilde{\preceq} d(\tilde{x}_{n},\tilde{x})+d(\tilde{x}_{m},\tilde{x}) \tilde{\ll} 2\tilde{c}.
\end{equation*}
Therefore $\lbrace \tilde{x}_{n} \rbrace$ is a Cauchy sequence.
\end{proof}

\begin{theorem}
Let $(\tilde{X},d,A)$ a soft cone metric space and $(P,A)$ be a normal soft cone with normal soft constant $\tilde{\alpha}$. Let $\lbrace \tilde{x}_{n} \rbrace$ be a sequence of soft elements in $\tilde{X}$. Then, $\lbrace \tilde{x}_{n} \rbrace$ is a Cauchy sequence if and only if $d(\tilde{x}_{n},\tilde{x}_{m}) \to \Theta$ as $n,n \to \infty$.
\end{theorem}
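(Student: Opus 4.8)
The plan is to transcribe, almost verbatim, the proof of the earlier characterization of convergence (the theorem asserting $\tilde{x}_{n}\to\tilde{x}$ iff $d(\tilde{x}_{n},\tilde{x})\to\Theta$), replacing the fixed limit $\tilde{x}$ by the second running index $\tilde{x}_{m}$ throughout. Here $d(\tilde{x}_{n},\tilde{x}_{m})\to\Theta$ is read, in accordance with convergence in the soft normed space $\tilde{E}$, as $\Vert d(\tilde{x}_{n},\tilde{x}_{m})\Vert\to\bar{0}$ as $n,m\to\infty$.

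First I would handle the forward implication. Assume $\{\tilde{x}_{n}\}$ is Cauchy and fix an arbitrary soft real $\tilde{\epsilon}\tilde{>}\bar{0}$. Since $Int(P,A)\neq\Phi$, choose $\tilde{c}\tilde{\in}\tilde{E}$ with $\Theta\tilde{\ll}\tilde{c}$ and, shrinking $\tilde{c}$ if necessary, $\tilde{\alpha}\Vert\tilde{c}\Vert\tilde{<}\tilde{\epsilon}$. By the Cauchy property there is a natural number $N$ with $d(\tilde{x}_{n},\tilde{x}_{m})\tilde{\ll}\tilde{c}$ for all $n,m>N$; since by (d1) also $\Theta\tilde{\preceq}d(\tilde{x}_{n},\tilde{x}_{m})$, normality of $(P,A)$ with constant $\tilde{\alpha}$ yields $\Vert d(\tilde{x}_{n},\tilde{x}_{m})\Vert\tilde{\leq}\tilde{\alpha}\Vert\tilde{c}\Vert\tilde{<}\tilde{\epsilon}$ whenever $n,m>N$. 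As $\tilde{\epsilon}$ was arbitrary, $d(\tilde{x}_{n},\tilde{x}_{m})\to\Theta$.

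For the converse, assume $\Vert d(\tilde{x}_{n},\tilde{x}_{m})\Vert\to\bar{0}$ and fix $\tilde{c}\tilde{\in}\tilde{E}$ with $\Theta\tilde{\ll}\tilde{c}$, i.e. $\tilde{c}\tilde{\in}Int(P,A)$. As in the proof of the convergence theorem, because $Int(P,A)$ is soft open there is $\tilde{\delta}\tilde{>}\bar{0}$ such that $\Vert\tilde{y}\Vert\tilde{<}\tilde{\delta}$ forces $\tilde{c}-\tilde{y}\tilde{\in}Int(P,A)$. For this $\tilde{\delta}$ pick $N$ with $\Vert d(\tilde{x}_{n},\tilde{x}_{m})\Vert\tilde{<}\tilde{\delta}$ for all $n,m>N$; then $\tilde{c}-d(\tilde{x}_{n},\tilde{x}_{m})\tilde{\in}Int(P,A)$, that is, $d(\tilde{x}_{n},\tilde{x}_{m})\tilde{\ll}\tilde{c}$, for all $n,m>N$. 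Hence $\{\tilde{x}_{n}\}$ is a Cauchy sequence.

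The only genuine content, and the step I expect to need the most care, is the claim invoked in the converse direction that a soft $\tilde{\delta}$-ball around $\Theta$ lands inside $\tilde{c}-Int(P,A)$; this is precisely the solidity-type argument already used in the convergence theorem, so I would either reuse that step or spell out that $\tilde{c}\tilde{\in}Int(P,A)$ together with the open-ball characterization of interior elements in $(\tilde{E},\Vert\cdot\Vert,A)$ produces such a $\tilde{\delta}$. Everything else is a direct replacement of the pair $(n,\tilde{x})$ by $(n,m)$, using (d1) for the lower bound $\Theta\tilde{\preceq}d(\tilde{x}_{n},\tilde{x}_{m})$ and the normal soft constant $\tilde{\alpha}$ for the norm estimate.
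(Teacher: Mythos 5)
Your proposal is correct and follows essentially the same route as the paper's own proof: the forward direction via choosing $\tilde{c}$ with $\tilde{\alpha}\Vert\tilde{c}\Vert\tilde{<}\tilde{\epsilon}$ and applying normality, and the converse via the $\tilde{\delta}$-ball argument showing $\Vert\tilde{y}\Vert\tilde{<}\tilde{\delta}$ implies $\tilde{c}-\tilde{y}\tilde{\in}Int(P,A)$, exactly as in the convergence theorem with $\tilde{x}$ replaced by $\tilde{x}_{m}$. No gaps beyond those already present in the paper's treatment.
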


\begin{proof}
Suppose that $\lbrace \tilde{x}_{n} \rbrace$ is a Cauchy sequence. For every soft real number $\tilde{\epsilon}\tilde{>}\bar{0}$, choose  $\tilde{c}\tilde{\in}\tilde{E}$ with $\Theta \tilde{\ll} \tilde{c}$ and $\tilde{\alpha} \Vert \tilde{c} \Vert \tilde{<} \tilde{\epsilon}$. Then, there is a natural number $N$, such that for all $nim>N$, $d(\tilde{x}_{n},\tilde{x}_{m})\tilde{\ll}\tilde{c}$. So, $\Vert d(\tilde{x}_{n},\tilde{x}_{m}) \Vert \tilde{\leq} \tilde{\alpha} \Vert \tilde{c} \Vert \tilde{<} \tilde{\epsilon}$. This means $d(\tilde{x}_{n},\tilde{x}_{m}) \to \Theta$ as $n,m \to \infty$.

Conversely, suppose that $d(\tilde{x}_{n},\tilde{x}_{m}) \to \Theta$ as $n,m \to \infty$. For $\tilde{c}\tilde{\in}\tilde{E}$ with $\Theta \tilde{\ll}\tilde{c}$, there is $\tilde{\delta}\tilde{>}\bar{0}$ such that $\Vert \tilde{x} \Vert \tilde{<} \tilde{\delta}$ implies $\tilde{c}-\tilde{x} \tilde{\in} Int(P,A)$. For this $\tilde{\delta}$, there is $N$ such that for all $n,m>N$, $\Vert d(\tilde{x}_{n},\tilde{x}_{m}) \Vert \tilde{<} \tilde{\delta}$. So, $\tilde{c}-d(\tilde{x}_{n},\tilde{x}_{m}) \tilde{\in} Int(P,A)$. This means $d(\tilde{x}_{n},\tilde{x}_{m})\tilde{\ll}\tilde{c}$. Therefore $\lbrace \tilde{x}_{n} \rbrace$ is a Cauchy sequence.
\end{proof}

\section{Fixed Point Theorems on Soft Cone Metric Spaces}\label{Sec4}

In this section, we prove some fixed point theorems of contractive mappings on soft cone metric spaces.

\begin{definition}
Let $(\tilde{X},d,A)$ be a soft cone metric space and $T:(\tilde{X},d,A) \rightarrow (\tilde{X},d,A)$ be a mapping. If there exists a soft element $\tilde{x}_{0}\tilde{\in}\tilde{X}$ such that $T\tilde{x}_{0}=\tilde{x}_{0}$, then $\tilde{x}_{0}$ is called a fixed element of $T$.
\end{definition}

\begin{definition}
Let $(\tilde{X},d,A)$ be a soft cone metric space and $T:(\tilde{X},d,A) \rightarrow (\tilde{X},d,A)$ be a mapping. For every $\tilde{x}_{0}\tilde{\in}\tilde{X}$, we can construct the sequence $\lbrace \tilde{x}_{n} \rbrace$ of soft elements by choosing $\tilde{x}_{0}$ and continuing by:
\begin{equation*}
\tilde{x}_{1}=T\tilde{x}_{0}, \quad \tilde{x}_{2}=T\tilde{x}_{1}=T^{2}\tilde{x}_{0}, \quad \dots \quad \tilde{x}_{n}=T\tilde{x}_{n-1}=T^{n}\tilde{x}_{0}, \quad \dots
\end{equation*}
We say that the sequence $\lbrace \tilde{x}_{n} \rbrace$ is constructed by iteration method.
\end{definition}

\begin{definition}
Let $(\tilde{X},d,A)$ be a soft cone metric space and $T:(\tilde{X},d,A) \rightarrow (\tilde{X},d,A)$ be a mapping. If there is a positive soft real number $\tilde{t}$ with $\bar{0}\tilde{\leq}\tilde{t}\tilde{<}\bar{1}$ such that
\begin{equation*}
d\left( T\tilde{x},T\tilde{y} \right) \tilde{\preceq} \tilde{t} d\left( \tilde{x},\tilde{y} \right), \,
\forall \tilde{x},\tilde{y}\tilde{\in}\tilde{X},
\end{equation*}
then $T$ is called contractive mapping in $\tilde{X}$.
\end{definition}

\begin{theorem}\label{Teo4.4}
Let $(\tilde{X},d,A)$ be a complete soft cone metric space and $T:(\tilde{X},d,A) \rightarrow (\tilde{X},d,A)$ be a contractive mapping. Then $T$ has a unique fixed soft element in $\tilde{X}$. For each $\tilde{x}\tilde{\in}\tilde{X}$, the iterative sequence $\lbrace T^{n}\tilde{x} \rbrace$ converges to the fixed soft element.
\end{theorem}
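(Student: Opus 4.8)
The plan is to transport the Huang--Zhang proof of the Banach contraction principle to the soft setting, working entirely with soft elements and with the interior relation $\tilde\ll$, so that no normality assumption on $(P,A)$ is needed (if one does assume $(P,A)$ normal, the norm characterisations of convergence and of Cauchy sequences proved above let one shortcut several steps). Fix an arbitrary $\tilde x\tilde\in\tilde X$ and let $\tilde x_n=T^n\tilde x$ be the iterative sequence. Since scalar multiplication and $\tilde\preceq$ are defined parameterwise, a one-line induction from the contractive inequality gives $d(\tilde x_{n+1},\tilde x_n)\tilde\preceq\tilde t^{\,n}\,d(\tilde x_1,\tilde x_0)$ for all $n$, where $\tilde t^{\,n}$ is the soft real $\lambda\mapsto\tilde t(\lambda)^n$. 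Applying (d3) repeatedly and summing the geometric progression then yields, for $m>n$,
\begin{equation*}
d(\tilde x_n,\tilde x_m)\tilde\preceq\big(\tilde t^{\,n}+\tilde t^{\,n+1}+\cdots+\tilde t^{\,m-1}\big)\,d(\tilde x_1,\tilde x_0)\tilde\preceq\frac{\tilde t^{\,n}}{\bar 1-\tilde t}\,d(\tilde x_1,\tilde x_0),
\end{equation*}
the quotient being a well-defined non-negative soft real because $\bar 1-\tilde t\tilde>\bar 0$, and the last inequality using closure of $(P,A)$ under multiplication by non-negative soft reals (cone axiom~(2)).

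Next I would record the two soft order--topology facts that drive the interior-based argument, both obtained parameterwise from the fact that $Int(P,A)$ is soft open: (i) if $\tilde u\tilde\preceq\tilde v$ and $\tilde v\tilde\ll\tilde c$, then $\tilde u\tilde\ll\tilde c$, since $\tilde c-\tilde u=(\tilde c-\tilde v)+(\tilde v-\tilde u)$ is the sum of a soft element of $Int(P,A)$ and a soft element of $(P,A)$; and (ii) if $\Vert\tilde a_n\Vert\to\bar 0$ and $\Theta\tilde\ll\tilde c$, then $\tilde a_n\tilde\ll\tilde c$ for all large $n$, by the same $\tilde\delta$-argument already used in the convergence and Cauchy theorems above ($\Vert\tilde y\Vert\tilde<\tilde\delta\Rightarrow\tilde c-\tilde y\tilde\in Int(P,A)$). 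Because $\bar 0\tilde\le\tilde t\tilde<\bar 1$ we have $\big\Vert\frac{\tilde t^{\,n}}{\bar 1-\tilde t}\,d(\tilde x_1,\tilde x_0)\big\Vert\to\bar 0$ --- this is where finiteness of the parameter set $A$ (as in \Cref{Teo2.11}) enters, guaranteeing $\tilde t^{\,n}\to\bar 0$ uniformly in $\lambda$. Combining this with (i) and (ii) shows that for every $\tilde c\tilde\in\tilde E$ with $\Theta\tilde\ll\tilde c$ there is $N$ with $d(\tilde x_n,\tilde x_m)\tilde\ll\tilde c$ whenever $n,m>N$; hence $\{\tilde x_n\}$ is Cauchy, and by completeness $\tilde x_n\to\tilde x^*$ for some $\tilde x^*\tilde\in\tilde X$.

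To see that $\tilde x^*$ is fixed, fix $\tilde c\tilde\in\tilde E$ with $\Theta\tilde\ll\tilde c$ and, using convergence, choose $n$ so large that $d(\tilde x_n,\tilde x^*)\tilde\ll\frac{\tilde c}{\bar 2}$ and $d(\tilde x_{n+1},\tilde x^*)\tilde\ll\frac{\tilde c}{\bar 2}$. Then, by (d3), the contractive inequality, and $\tilde t\,d(\tilde x_n,\tilde x^*)\tilde\preceq d(\tilde x_n,\tilde x^*)$ (again cone axiom~(2)),
\begin{equation*}
d(T\tilde x^*,\tilde x^*)\tilde\preceq d(T\tilde x^*,T\tilde x_n)+d(\tilde x_{n+1},\tilde x^*)\tilde\preceq\tilde t\,d(\tilde x_n,\tilde x^*)+d(\tilde x_{n+1},\tilde x^*)\tilde\ll\tilde c.
\end{equation*}
Since $\tilde c\tilde\gg\Theta$ was arbitrary and $\Theta\tilde\preceq d(T\tilde x^*,\tilde x^*)$, the only soft element dominated by every such $\tilde c$ is $\Theta$, so $d(T\tilde x^*,\tilde x^*)=\Theta$ and hence $T\tilde x^*=\tilde x^*$ by (d1). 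For uniqueness, if $T\tilde y^*=\tilde y^*$ too, then $d(\tilde x^*,\tilde y^*)=d(T\tilde x^*,T\tilde y^*)\tilde\preceq\tilde t\,d(\tilde x^*,\tilde y^*)$, so both $(\bar 1-\tilde t)\,d(\tilde x^*,\tilde y^*)$ and its negative lie in $(P,A)$; cone axiom~(3) forces $(\bar 1-\tilde t)\,d(\tilde x^*,\tilde y^*)=\Theta$, and since $(\bar 1-\tilde t)(\lambda)>0$ for every $\lambda$ this gives $d(\tilde x^*,\tilde y^*)=\Theta$, i.e.\ $\tilde x^*=\tilde y^*$. As the starting point $\tilde x$ was arbitrary, $\{T^n\tilde x\}$ always converges to this unique fixed soft element.

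The step I expect to be the real obstacle is not the inequality chain but the two auxiliary lemmas of the second paragraph: verifying carefully that $\tilde t^{\,n}\to\bar 0$ in $\mathbb R(A)$ (which genuinely requires $A$ finite, or at least $\sup_\lambda\tilde t(\lambda)<1$) and that $Int(P,A)$ behaves like the interior of a crisp cone parameterwise so that facts (i)--(ii) are legitimate; once these are in place, the remainder is the classical Banach/Huang--Zhang argument carried out parameter by parameter.
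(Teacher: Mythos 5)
Your proposal is correct and follows essentially the same Huang--Zhang-style argument as the paper: the same iterative sequence and geometric tail bound $\frac{\tilde t^{\,m}}{\bar 1-\tilde t}\,d(\tilde x_1,\tilde x_0)$, the same interior-based ($\tilde{\ll}$) Cauchy criterion without normality, and the same passage to $d(T\tilde x^*,\tilde x^*)=\Theta$ from $d(T\tilde x^*,\tilde x^*)\tilde{\ll}\tilde c$ for every $\tilde c$ with $\Theta\tilde{\ll}\tilde c$, using closedness of $(P,A)$ and cone axiom (3). You in fact go slightly beyond the paper's proof: you make explicit the auxiliary order facts ($\tilde u\tilde{\preceq}\tilde v\tilde{\ll}\tilde c\Rightarrow\tilde u\tilde{\ll}\tilde c$, and small norm implies $\tilde{\ll}\tilde c$) that the paper uses silently, you supply the uniqueness argument via $(\bar 1-\tilde t)\,d(\tilde x^*,\tilde y^*)$ and cone axioms (2)--(3), which the paper's proof of this theorem omits even though the statement claims uniqueness, and you correctly flag that the step $\tilde t^{\,n}\to\bar 0$ in $\mathbb{R}(A)$ --- taken for granted in the paper when choosing $N_{1}$ --- genuinely requires $A$ finite or $\sup_{\lambda}\tilde t(\lambda)<1$.
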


\begin{proof}
Let $(\tilde{X},d,A)$ be a complete soft cone metric space and $T:(\tilde{X},d,A) \rightarrow (\tilde{X},d,A)$ be a contractive mapping. Then, there is a positive soft real number $\tilde{t}$ with $\bar{0}\tilde{\leq}\tilde{t}\tilde{<}\bar{1}$ such that
\begin{equation*}
d\left( T\tilde{x},T\tilde{y} \right) \tilde{\preceq} \tilde{t} d\left( \tilde{x},\tilde{y} \right), \,
 \forall \tilde{x},\tilde{y}\tilde{\in}\tilde{X}.
\end{equation*}
For each $\tilde{x}_{0}\tilde{\in}\tilde{X}$ and $n \geq 1$, by iteration method, we have a sequence $\lbrace \tilde{x}_{n} \rbrace$ of soft elements in $\tilde{X}$ by letting $\tilde{x}_{1}=T\tilde{x}_{0}, \, \tilde{x}_{2}=T\tilde{x}_{1}=T^{2}\tilde{x}_{0}, \, \dots \, ,\tilde{x}_{n+1}=T\tilde{x}_{n}=T^{n+1}\tilde{x}_{0}, \, \dots$. Then, 
\begin{equation*}
d\left( \tilde{x}_{n+1},\tilde{x}_{n} \right) = d\left( T\tilde{x}_{n},T\tilde{x}_{n-1} \right) \tilde{\preceq} \tilde{t} d\left( \tilde{x}_{n},\tilde{x}_{n-1} \right)\tilde{\preceq} \tilde{t}^{2} d\left( \tilde{x}_{n-1},\tilde{x}_{n-2} \right)\tilde{\preceq} \dots \tilde{\preceq} \tilde{t}^{n} d\left( \tilde{x}_{1},\tilde{x}_{0} \right).
\end{equation*}
So, for $n>m$,
\begin{align*}
d\left( \tilde{x}_{n},\tilde{x}_{m} \right) &\tilde{\preceq} d\left( \tilde{x}_{n},\tilde{x}_{n-1} \right) + d\left( \tilde{x}_{n-1},\tilde{x}_{n-2} \right) + \cdots + d\left( \tilde{x}_{m+1},\tilde{x}_{m} \right) \\
&\tilde{\preceq} \left( \tilde{t}^{n-1}+\tilde{t}^{n-2}+ \cdots + \tilde{t}^{m} \right) d\left( \tilde{x}_{1},\tilde{x}_{0} \right) \\
&\tilde{\preceq} \frac{\tilde{t}^{m}}{\bar{1}-\tilde{t}}d\left( \tilde{x}_{1},\tilde{x}_{0} \right).
\end{align*}
Let $\tilde{c}\tilde{\in}\tilde{E}$ with $\Theta \tilde{\ll} \tilde{c}$ be given and choose $\tilde{\delta} \tilde{>} \bar{0}$ such that $\tilde{c}+N_{\tilde{\delta}}(\Theta) \tilde{\subset} (P,A)$, where $N_{\tilde{\delta}}(\Theta)=SS(\lbrace \tilde{y}\tilde{\in}\tilde{E}: \Vert \tilde{y} \Vert \tilde{<} \tilde{\delta} \rbrace)$. Also, choose a natural number $N_{1}$ such that $\frac{\tilde{t}^{m}}{\bar{1}-\tilde{t}}d\left( \tilde{x}_{1},\tilde{x}_{0} \right) \tilde{\in} N_{\tilde{\delta}}(\Theta)$, $\forall m\geq N_{1}$. Then, $\frac{\tilde{t}^{m}}{\bar{1}-\tilde{t}}d\left( \tilde{x}_{1},\tilde{x}_{0} \right) \tilde{\ll} \tilde{c}$, $\forall m>N_{1}$. Thus,
\begin{equation*}
d\left( \tilde{x}_{n},\tilde{x}_{m} \right) \tilde{\preceq} \frac{\tilde{t}^{m}}{\bar{1}-\tilde{t}}d\left( \tilde{x}_{1},\tilde{x}_{0} \right) \tilde{\ll} \tilde{c}, \quad  \forall n>m.
\end{equation*}
Therefore, $\lbrace \tilde{x}_{n} \rbrace$ is a Cauchy sequence in $\tilde{X}$. Since, $(\tilde{X},d,A)$ is a complete soft cone metric space, then exists $\tilde{x}^{*}\tilde{\in}\tilde{X}$ such that $\tilde{x}_{n} \to \tilde{x}^{*}$ as $n \to \infty$. Choose a natural number $N_{2}$ such that $d\left( \tilde{x}_{n},\tilde{x}^{*} \right)\tilde{\ll}\tilde{\frac{c}{2}}$, $\forall n \geq N_{2}$. Hence, 
\begin{align*}
d\left( T\tilde{x}^{*},\tilde{x}^{*} \right) &\tilde{\preceq} d\left( T\tilde{x}_{n},T\tilde{x}^{*} \right) + d\left( T\tilde{x}_{n},\tilde{x}^{*} \right) \\
&\tilde{\preceq} \tilde{t} d\left( \tilde{x}_{n},\tilde{x}^{*} \right) + d\left( \tilde{x}_{n+1},\tilde{x}^{*} \right) \\
&\tilde{\preceq} d\left( \tilde{x}_{n},\tilde{x}^{*} \right) + d\left( \tilde{x}_{n+1},\tilde{x}^{*} \right) \\
&\tilde{\ll} \tilde{\frac{c}{2}} + \tilde{\frac{c}{2}}=\tilde{c}, \quad \forall n \geq N_{2}.
\end{align*}
Thus, $d\left( T\tilde{x}^{*},\tilde{x}^{*} \right)\tilde{\ll}\tilde{\frac{c}{m}}$, $\forall m \geq 1$. So, $\tilde{\frac{c}{m}}
-d\left( T\tilde{x}^{*},\tilde{x}^{*} \right) \tilde{\in} (P,A)$, $\forall m \geq 1$. Since, $\tilde{\frac{c}{m}} \to \Theta$ as $m \to \infty$ and $(P,A)$ is closed, $-d\left( T\tilde{x}^{*},\tilde{x}^{*} \right)\tilde{\in}(P,A)$. But, $d\left( T\tilde{x}^{*},\tilde{x}^{*} \right)\tilde{\in}(P,A)$. Therefore, $d\left( T\tilde{x}^{*},\tilde{x}^{*} \right)=\Theta$ and so, $T\tilde{x}^{*}=\tilde{x}^{*}$.
\end{proof}

\begin{corollary}
Let $(\tilde{X},d,A)$ be a complete soft cone metric space. For $\Theta \tilde{\ll} \tilde{c}$ and $\tilde{x}_{0}\tilde{\in}\tilde{X}$, set $B(\tilde{x}_{0},\tilde{c})=\lbrace \tilde{x}\tilde{\in}\tilde{X}: d(\tilde{x}_{0},\tilde{x})\tilde{\ll}\tilde{c} \rbrace$ and $(P,A)=SS(B(\tilde{x}_{0},\tilde{c}))$. Suppose that the mapping $T:(\tilde{X},d,A) \rightarrow (\tilde{X},d,A)$ satisfies the contractive condition
\begin{equation*}
d\left( T\tilde{x},T\tilde{y} \right) = \tilde{t} d\left( \tilde{x},\tilde{y} \right), \quad \forall \tilde{x},\tilde{y}\tilde{\in}(P,A),
\end{equation*}
where $\bar{0}\tilde{\leq}\tilde{t}\tilde{<}\bar{1}$ is a soft constant and $d\left( T\tilde{x}_{0},\tilde{x}_{0} \right)\tilde{\preceq} (\bar{1} - \tilde{t})\tilde{c}$. Then, $T$ has a unique fixed soft element in $(P,A)$.
\end{corollary}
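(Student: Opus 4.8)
The statement is a localized version of \Cref{Teo4.4}: the contractive condition is assumed only on the ball $(P,A)=SS(B(\tilde{x}_{0},\tilde{c}))$, so the plan is to show that $T$ restricts to a self-map of $(P,A)$, that the Picard iterates stay inside $(P,A)$ and converge to a point of $(P,A)$, and then to rerun the argument of \Cref{Teo4.4} with all the points involved lying inside $(P,A)$, where the contractive condition is available. At the outset one may assume $\bar{0}\tilde{<}\tilde{t}$: if $\tilde{t}=\bar{0}$ the contractive equality forces $T$ to be constant on $(P,A)$ and the conclusion is immediate.

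The heart of the proof is the invariance $T\big(B(\tilde{x}_{0},\tilde{c})\big)\tilde{\subseteq}B(\tilde{x}_{0},\tilde{c})$. For $\tilde{x}\tilde{\in}B(\tilde{x}_{0},\tilde{c})$, axioms (d3), (d2), the contractive equality, and the hypothesis $d(T\tilde{x}_{0},\tilde{x}_{0})\tilde{\preceq}(\bar{1}-\tilde{t})\tilde{c}$ give
\begin{equation*}
d(\tilde{x}_{0},T\tilde{x})\tilde{\preceq}d(T\tilde{x}_{0},\tilde{x}_{0})+d(T\tilde{x}_{0},T\tilde{x})=d(T\tilde{x}_{0},\tilde{x}_{0})+\tilde{t}\,d(\tilde{x}_{0},\tilde{x})\tilde{\preceq}(\bar{1}-\tilde{t})\tilde{c}+\tilde{t}\,d(\tilde{x}_{0},\tilde{x}).
\end{equation*}
Since $\tilde{x}\tilde{\in}B(\tilde{x}_{0},\tilde{c})$ means $\tilde{c}-d(\tilde{x}_{0},\tilde{x})\tilde{\in}Int(P,A)$ and $\tilde{t}\tilde{>}\bar{0}$, the difference $\tilde{c}-\big[(\bar{1}-\tilde{t})\tilde{c}+\tilde{t}\,d(\tilde{x}_{0},\tilde{x})\big]=\tilde{t}\big(\tilde{c}-d(\tilde{x}_{0},\tilde{x})\big)$ again lies in $Int(P,A)$, so $d(\tilde{x}_{0},T\tilde{x})\tilde{\ll}\tilde{c}$, i.e.\ $T\tilde{x}\tilde{\in}B(\tilde{x}_{0},\tilde{c})$. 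By the proposition relating soft inclusion to soft elements, $T$ then restricts to a self-map of $(P,A)$; also $\tilde{x}_{0}\tilde{\in}B(\tilde{x}_{0},\tilde{c})$ because $d(\tilde{x}_{0},\tilde{x}_{0})=\Theta$ and $\Theta\tilde{\ll}\tilde{c}$.

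Next I would imitate the proof of \Cref{Teo4.4} inside $(P,A)$: with $\tilde{x}_{n}=T^{n}\tilde{x}_{0}$ one gets $d(\tilde{x}_{n+1},\tilde{x}_{n})\tilde{\preceq}\tilde{t}^{n}d(\tilde{x}_{1},\tilde{x}_{0})$, hence $\{\tilde{x}_{n}\}$ is a Cauchy sequence and, by completeness of $(\tilde{X},d,A)$, converges to some $\tilde{x}^{*}\tilde{\in}\tilde{X}$; moreover $d(\tilde{x}_{0},\tilde{x}_{n})\tilde{\preceq}(\bar{1}-\tilde{t}^{n})\tilde{c}\tilde{\preceq}\tilde{c}$ for every $n$. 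Comparing $d(\tilde{x}_{0},\tilde{x}^{*})\tilde{\preceq}d(\tilde{x}_{0},\tilde{x}_{n})+d(\tilde{x}_{n},\tilde{x}^{*})$ with $\tilde{c}+\frac{1}{m}\tilde{c}$ (choosing $n$ large for each $m$, which is legitimate since $\Theta\tilde{\ll}\frac{1}{m}\tilde{c}$) and letting $m\to\infty$, closedness of $(P,A)$ gives $d(\tilde{x}_{0},\tilde{x}^{*})\tilde{\preceq}\tilde{c}$, and, arguing as in the invariance step, one obtains the strict inequality $d(\tilde{x}_{0},\tilde{x}^{*})\tilde{\ll}\tilde{c}$ (this is where the room in the hypothesis $d(T\tilde{x}_{0},\tilde{x}_{0})\tilde{\preceq}(\bar{1}-\tilde{t})\tilde{c}$ is needed), so $\tilde{x}^{*}\tilde{\in}(P,A)$. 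Then the closing computation of \Cref{Teo4.4} applies verbatim — $d(T\tilde{x}^{*},\tilde{x}^{*})\tilde{\preceq}\tilde{t}\,d(\tilde{x}_{n},\tilde{x}^{*})+d(\tilde{x}_{n+1},\tilde{x}^{*})\tilde{\ll}\tilde{c}$ for every $\tilde{c}$ with $\Theta\tilde{\ll}\tilde{c}$, hence $d(T\tilde{x}^{*},\tilde{x}^{*})=\Theta$ by closedness of $(P,A)$ — so $T\tilde{x}^{*}=\tilde{x}^{*}$. Uniqueness is immediate: if $T\tilde{y}=\tilde{y}$ with $\tilde{y}\tilde{\in}(P,A)$, then $d(\tilde{x}^{*},\tilde{y})=d(T\tilde{x}^{*},T\tilde{y})\tilde{\preceq}\tilde{t}\,d(\tilde{x}^{*},\tilde{y})$, i.e.\ $(\bar{1}-\tilde{t})\,d(\tilde{x}^{*},\tilde{y})\tilde{\preceq}\Theta$, and $\bar{1}-\tilde{t}\tilde{>}\bar{0}$ together with the soft cone axioms forces $d(\tilde{x}^{*},\tilde{y})=\Theta$.

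The step I expect to be the real obstacle is the localization/invariance bookkeeping: one must push strict soft inequalities $\tilde{\ll}$ through sums and through multiplication by the strictly positive soft scalars $\tilde{t}$ and $\bar{1}-\tilde{t}$, relying on $Int(P,A)$ being stable under addition of elements of $(P,A)$ and under positive soft-scalar multiples, and — most delicately — one must make sure the limit $\tilde{x}^{*}$ of the iterates does not slip out of the open ball $B(\tilde{x}_{0},\tilde{c})$; it is precisely the quantitative hypothesis $d(T\tilde{x}_{0},\tilde{x}_{0})\tilde{\preceq}(\bar{1}-\tilde{t})\tilde{c}$ that keeps the whole orbit, and its limit, inside $(P,A)$. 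Everything after that is a transcription of the proof of \Cref{Teo4.4}.
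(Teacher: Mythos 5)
Your overall architecture --- show that the ball is $T$-invariant, run the Picard iteration of \Cref{Teo4.4} inside it, and check that the limit stays in the ball --- is essentially the paper's: the paper proves that $(P,A)$ is complete and that $T\tilde{x}\tilde{\in}(P,A)$ for all $\tilde{x}\tilde{\in}(P,A)$, and then invokes \Cref{Teo4.4}. The genuine gap is the step where you claim $d(\tilde{x}_{0},\tilde{x}^{*})\tilde{\ll}\tilde{c}$ for the limit ``arguing as in the invariance step''. The invariance argument works only at points already known to lie in the ball; at the limit all you can extract from $d(\tilde{x}_{0},\tilde{x}_{n})\tilde{\preceq}(\bar{1}-\tilde{t}^{n})\tilde{c}$ by letting $n\to\infty$ is the non-strict bound $d(\tilde{x}_{0},\tilde{x}^{*})\tilde{\preceq}\tilde{c}$, and the hypothesis $d(T\tilde{x}_{0},\tilde{x}_{0})\tilde{\preceq}(\bar{1}-\tilde{t})\tilde{c}$ is itself non-strict, so it provides no ``room'' to upgrade this to $\tilde{\ll}$. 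In fact the strict conclusion is false under the stated hypotheses: take $A$ a singleton, $E=\mathbb{R}$, $P=[0,\infty)$, $X=\mathbb{R}$ with the usual metric, $\tilde{x}_{0}=0$, $\tilde{c}=1$, $\tilde{t}=1/2$ and $Tx=x/2+1/2$. Then $d(Tx,Ty)=\tfrac{1}{2}d(x,y)$ and $d(T0,0)=1/2=(1-\tilde{t})\tilde{c}$, the iterates $1-2^{-n}$ all lie in the open ball $(-1,1)$, but the unique fixed point is $1$, which sits on the boundary. So $\tilde{x}^{*}$ need not belong to $(P,A)$ as you (and the statement) define it with $\tilde{\ll}$, and without that membership you cannot apply the contractive condition at $\tilde{x}^{*}$ in your closing computation, nor in your uniqueness argument.

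What rescues the corollary --- and what the paper's proof implicitly does, following Huang--Zhang --- is to read $B(\tilde{x}_{0},\tilde{c})$ as the closed ball $\lbrace \tilde{x}\tilde{\in}\tilde{X}: d(\tilde{x}_{0},\tilde{x})\tilde{\preceq}\tilde{c}\rbrace$: the paper only ever proves $d(\tilde{x}_{0},\tilde{x})\tilde{\preceq}\tilde{c}$ for limits of Cauchy sequences of $(P,A)$ (completeness of $(P,A)$) and $d(\tilde{x}_{0},T\tilde{x})\tilde{\preceq}(\bar{1}-\tilde{t})\tilde{c}+\tilde{t}\tilde{c}=\tilde{c}$ (invariance), and then applies \Cref{Teo4.4} to the restriction of $T$. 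With that reading your proposal goes through and coincides in substance with the paper's; with the open ball as literally written, one would need a strict hypothesis such as $d(T\tilde{x}_{0},\tilde{x}_{0})\tilde{\ll}(\bar{1}-\tilde{t})\tilde{c}$, which would give $d(\tilde{x}_{0},\tilde{x}^{*})\tilde{\preceq}\frac{\bar{1}}{\bar{1}-\tilde{t}}d(\tilde{x}_{1},\tilde{x}_{0})\tilde{\ll}\tilde{c}$. Two smaller points: your dichotomy $\tilde{t}=\bar{0}$ versus $\tilde{t}\tilde{>}\bar{0}$ is not exhaustive for soft scalars (some coordinates may vanish while others do not), and the fact that multiplication by a strictly positive soft scalar maps $Int(P,A)$ into itself, on which your invariance step leans, is not established anywhere in the paper and would need a short argument; the paper's non-strict version needs neither.
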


\begin{proof}
We prove that $(P,A)$ is complete and $T\tilde{x}\tilde{\in}(P,A)$, $\forall \tilde{x}\tilde{\in}(P,A)$. Suppose the sequence $\lbrace \tilde{x}_{n} \rbrace$ of soft elements of $(P,A)$ is a Cauchy sequence in $(P,A)$. Then, $\lbrace \tilde{x}_{n} \rbrace$ is also a Cauchy sequence in $\tilde{X}$. By the completeness of $\tilde{X}$, there is $\tilde{x}\tilde{\in}\tilde{X}$ such that $\tilde{x}_{n} \to \tilde{x}$ as $n \to \infty$. We have 
\begin{equation*}
d\left( \tilde{x}_{0},\tilde{x} \right) \tilde{\preceq} d\left( \tilde{x}_{0},\tilde{x}_{n} \right) + d\left( \tilde{x}_{n},\tilde{x} \right) \tilde{\preceq} d\left( \tilde{x}_{n},\tilde{x} \right) + \tilde{c}.
\end{equation*}
Since, $\tilde{x}_{n} \to \tilde{x}$, $d\left( \tilde{x}_{n},\tilde{x} \right) \to \Theta$. Hence, $d\left( \tilde{x}_{0},\tilde{x} \right)\tilde{\preceq}\tilde{c}$ and $\tilde{x}\tilde{\in}(P,A)$. Therefore, $(P,A)$ is complete. For every $\tilde{x}\tilde{\in} (P,A)$,
\begin{equation*}
d\left( \tilde{x}_{0},T\tilde{x} \right) \tilde{\preceq} d\left( T\tilde{x}_{0},\tilde{x}_{0} \right) + d\left( T\tilde{x}_{0},T\tilde{x} \right) \tilde{\preceq} (\bar{1}-\tilde{t})\tilde{c}+\tilde{t}\tilde{c}=\tilde{c}.
\end{equation*}
Hence, $T\tilde{x}\tilde{\in}(P,A)$.
\end{proof}

\begin{corollary}
Let $(\tilde{X},d,A)$ be a complete soft cone metric space. Suppose a mapping $T:(\tilde{X},d,A) \rightarrow (\tilde{X},d,A)$ satisfies the contractive condition for some positive integer $n$
\begin{equation*}
d\left( T^{n}\tilde{x},T^{n}\tilde{y} \right) \tilde{\preceq} \tilde{t} d\left( \tilde{x},\tilde{y} \right), \quad \forall \tilde{x},\tilde{y}\tilde{\in}\tilde{X},
\end{equation*}
where $\bar{0}\tilde{\leq}\tilde{t}\tilde{\leq}\bar{1}$ is a soft constant. Then, $T$ has a unique fixed soft element in $\tilde{X}$.
\end{corollary}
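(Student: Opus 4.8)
The plan is to reduce the statement to \Cref{Teo4.4} by working with the iterate $S := T^{n}$ instead of $T$ itself. By hypothesis $S$ satisfies $d(S\tilde{x},S\tilde{y}) \tilde{\preceq} \tilde{t}\, d(\tilde{x},\tilde{y})$ for all $\tilde{x},\tilde{y}\tilde{\in}\tilde{X}$ with $\bar{0}\tilde{\leq}\tilde{t}\tilde{<}\bar{1}$ (I would note that the intended hypothesis is the strict inequality $\tilde{t}\tilde{<}\bar{1}$; with $\tilde{t}\tilde{=}\bar{1}$ allowed the claim is false, as the crisp case already shows), so $S$ is a contractive mapping on the complete soft cone metric space $(\tilde{X},d,A)$. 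Applying \Cref{Teo4.4} to $S$, there is a unique soft element $\tilde{x}^{*}\tilde{\in}\tilde{X}$ with $S\tilde{x}^{*}=\tilde{x}^{*}$, i.e. $T^{n}\tilde{x}^{*}=\tilde{x}^{*}$.

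The next step is the standard commutation argument: since $T$ commutes with its own powers, $S(T\tilde{x}^{*}) = T^{n}(T\tilde{x}^{*}) = T(T^{n}\tilde{x}^{*}) = T\tilde{x}^{*}$, so $T\tilde{x}^{*}$ is also a fixed soft element of $S$. By the uniqueness clause in \Cref{Teo4.4}, $T\tilde{x}^{*} = \tilde{x}^{*}$, which exhibits $\tilde{x}^{*}$ as a fixed soft element of $T$.

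For uniqueness of the fixed soft element of $T$, suppose $\tilde{y}^{*}\tilde{\in}\tilde{X}$ satisfies $T\tilde{y}^{*}=\tilde{y}^{*}$. Then $T^{n}\tilde{y}^{*}=\tilde{y}^{*}$, so $\tilde{y}^{*}$ is a fixed soft element of $S$; again by the uniqueness part of \Cref{Teo4.4}, $\tilde{y}^{*}=\tilde{x}^{*}$. Hence $T$ has a unique fixed soft element in $\tilde{X}$.

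I do not expect any real obstacle here: once \Cref{Teo4.4} is available, the whole argument is the purely formal "iterate trick", and the only thing to be careful about is that the contraction constant in the hypothesis must be strictly less than $\bar{1}$ for \Cref{Teo4.4} to apply. No properties of soft cone metrics beyond those already used in \Cref{Teo4.4} (the triangle axiom (d3) and closedness of $(P,A)$) are needed.
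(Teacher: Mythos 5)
Your proposal is correct and is essentially the paper's own argument: apply \Cref{Teo4.4} to $T^{n}$, use $T^{n}(T\tilde{x}^{*})=T(T^{n}\tilde{x}^{*})=T\tilde{x}^{*}$ together with uniqueness of the fixed soft element of $T^{n}$ to get $T\tilde{x}^{*}=\tilde{x}^{*}$, and deduce uniqueness for $T$ from the fact that every fixed soft element of $T$ is one of $T^{n}$. Your side remark that the hypothesis must read $\tilde{t}\tilde{<}\bar{1}$ (the ``$\tilde{\leq}\bar{1}$'' in the statement is evidently a typo, since \Cref{Teo4.4} requires the strict inequality) is also well taken.
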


\begin{proof}
From \Cref{Teo4.4}, $T^{n}$ has a unique fixed soft element in $\tilde{X}$. But, $T^{n}(T\tilde{x}^{*})=T(T^{n}\tilde{x}^{*})=T\tilde{x}^{*}$, so $T\tilde{x}^{*}$ is also a fixed soft element of $T^{n}$. Hence, $T\tilde{x}^{*}=\tilde{x}$, $\tilde{x}$ is a fixed soft element of $T$. Since the fixed soft element of $T$ is also fixed soft element of $T^{n}$, the fixed soft element of $T$ is unique. 
\end{proof}

\begin{theorem}
Let $(\tilde{X},d,A)$ be a complete soft cone metric space and $T:(\tilde{X},d,A) \rightarrow (\tilde{X},d,A)$ satisfies the contractive condition 
\begin{equation*}
d\left( T\tilde{x},T\tilde{y} \right) \tilde{\preceq} \tilde{t} \left( d\left( T\tilde{x},\tilde{x} \right) + d\left( T\tilde{y},\tilde{y} \right) \right), \quad \forall \tilde{x},\tilde{y}\tilde{\in}\tilde{X},
\end{equation*}
where $\bar{0}\tilde{\leq}\tilde{t}\tilde{<}\bar{\frac{1}{2}}$ is a soft constant. Then, $T$ has a unique fixed soft element in $\tilde{X}$. For each $\tilde{x}\tilde{\in}\tilde{X}$, the iterative sequence $\lbrace T^{n}\tilde{x} \rbrace$ converges to the fixed soft element.
\end{theorem}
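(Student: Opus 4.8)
The plan is to run the Banach-type argument from the proof of \Cref{Teo4.4}, with the single contraction estimate replaced by one extracted from the Kannan-type condition. Fix any $\tilde{x}_{0}\tilde{\in}\tilde{X}$ and form the iterative sequence $\tilde{x}_{n+1}=T\tilde{x}_{n}$. Applying the hypothesis with $\tilde{x}=\tilde{x}_{n}$ and $\tilde{y}=\tilde{x}_{n-1}$ gives
\[
d(\tilde{x}_{n+1},\tilde{x}_{n})=d(T\tilde{x}_{n},T\tilde{x}_{n-1})\tilde{\preceq}\tilde{t}\bigl(d(\tilde{x}_{n+1},\tilde{x}_{n})+d(\tilde{x}_{n},\tilde{x}_{n-1})\bigr).
\]
Since $\bar{0}\tilde{\leq}\tilde{t}\tilde{<}\bar{\frac{1}{2}}$, the soft real number $\bar{1}-\tilde{t}$ is positive and $\frac{1}{\bar{1}-\tilde{t}}$ is a well-defined soft real number; because multiplication by a positive soft real preserves $\tilde{\preceq}$, moving the term $\tilde{t}\,d(\tilde{x}_{n+1},\tilde{x}_{n})$ to the left and dividing by $\bar{1}-\tilde{t}$ yields $d(\tilde{x}_{n+1},\tilde{x}_{n})\tilde{\preceq}\tilde{h}\,d(\tilde{x}_{n},\tilde{x}_{n-1})$, where $\tilde{h}:=\tilde{t}/(\bar{1}-\tilde{t})$ satisfies $\bar{0}\tilde{\leq}\tilde{h}\tilde{<}\bar{1}$. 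Iterating, $d(\tilde{x}_{n+1},\tilde{x}_{n})\tilde{\preceq}\tilde{h}^{n}d(\tilde{x}_{1},\tilde{x}_{0})$.

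From here I would reproduce the Cauchy estimate of \Cref{Teo4.4} almost verbatim: for $n>m$, axiom (d3) together with the geometric sum give $d(\tilde{x}_{n},\tilde{x}_{m})\tilde{\preceq}\frac{\tilde{h}^{m}}{\bar{1}-\tilde{h}}d(\tilde{x}_{1},\tilde{x}_{0})$; given $\tilde{c}$ with $\Theta\tilde{\ll}\tilde{c}$, choose $\tilde{\delta}\tilde{>}\bar{0}$ with $\tilde{c}+N_{\tilde{\delta}}(\Theta)\tilde{\subset}(P,A)$ and $N_{1}$ with $\frac{\tilde{h}^{m}}{\bar{1}-\tilde{h}}d(\tilde{x}_{1},\tilde{x}_{0})\tilde{\in}N_{\tilde{\delta}}(\Theta)$ for $m\ge N_{1}$, whence $d(\tilde{x}_{n},\tilde{x}_{m})\tilde{\ll}\tilde{c}$ for $n>m\ge N_{1}$. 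Thus $\lbrace\tilde{x}_{n}\rbrace$ is a Cauchy sequence, and by completeness of $(\tilde{X},d,A)$ there is $\tilde{x}^{*}\tilde{\in}\tilde{X}$ with $\tilde{x}_{n}\to\tilde{x}^{*}$.

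To show $\tilde{x}^{*}$ is a fixed soft element, I would estimate, for each $n$,
\[
d(T\tilde{x}^{*},\tilde{x}^{*})\tilde{\preceq}d(T\tilde{x}^{*},T\tilde{x}_{n})+d(T\tilde{x}_{n},\tilde{x}^{*})\tilde{\preceq}\tilde{t}\bigl(d(T\tilde{x}^{*},\tilde{x}^{*})+d(\tilde{x}_{n+1},\tilde{x}_{n})\bigr)+d(\tilde{x}_{n+1},\tilde{x}^{*}),
\]
which, after moving $\tilde{t}\,d(T\tilde{x}^{*},\tilde{x}^{*})$ to the left and dividing by $\bar{1}-\tilde{t}$, gives $d(T\tilde{x}^{*},\tilde{x}^{*})\tilde{\preceq}\frac{\tilde{t}}{\bar{1}-\tilde{t}}d(\tilde{x}_{n+1},\tilde{x}_{n})+\frac{1}{\bar{1}-\tilde{t}}d(\tilde{x}_{n+1},\tilde{x}^{*})$. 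The first summand is $\tilde{\preceq}\frac{\tilde{t}\,\tilde{h}^{n}}{\bar{1}-\tilde{t}}d(\tilde{x}_{1},\tilde{x}_{0})$, whose norm tends to $\bar{0}$, while the second is $\tilde{\ll}\tilde{\frac{c}{2}}$ for $n$ large; exactly as in \Cref{Teo4.4} one obtains $d(T\tilde{x}^{*},\tilde{x}^{*})\tilde{\ll}\tilde{\frac{c}{m}}$ for every $m\ge 1$, and since $\tilde{\frac{c}{m}}\to\Theta$ and $(P,A)$ is closed, $-d(T\tilde{x}^{*},\tilde{x}^{*})\tilde{\in}(P,A)$; combined with $d(T\tilde{x}^{*},\tilde{x}^{*})\tilde{\in}(P,A)$ this forces $d(T\tilde{x}^{*},\tilde{x}^{*})=\Theta$, i.e. $T\tilde{x}^{*}=\tilde{x}^{*}$. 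Moreover $\tilde{x}_{n}=T^{n}\tilde{x}_{0}\to\tilde{x}^{*}$ yields the asserted convergence of the iterates from an arbitrary starting point. For uniqueness, if $T\tilde{y}^{*}=\tilde{y}^{*}$ as well, then $d(\tilde{x}^{*},\tilde{y}^{*})=d(T\tilde{x}^{*},T\tilde{y}^{*})\tilde{\preceq}\tilde{t}\bigl(d(T\tilde{x}^{*},\tilde{x}^{*})+d(T\tilde{y}^{*},\tilde{y}^{*})\bigr)=\Theta$, so $\tilde{x}^{*}=\tilde{y}^{*}$.

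The only genuine obstacle I anticipate is the order-algebra bookkeeping in the soft cone: justifying that $\bar{1}-\tilde{t}$ is a positive soft real so that the inequalities may be ``divided'' by it while preserving $\tilde{\preceq}$, that $\tilde{h}\tilde{<}\bar{1}$ so the geometric series converges, and that the $\tilde{\ll}$-manipulations near $\Theta$ (absorbing a small $N_{\tilde{\delta}}(\Theta)$, closedness of $(P,A)$) run precisely as in \Cref{Teo4.4}. None of this is deep, but it is where care is needed, and it is exactly the place where the bound $\tilde{t}\tilde{<}\bar{\frac{1}{2}}$ — rather than $\tilde{t}\tilde{<}\bar{1}$ — is used.
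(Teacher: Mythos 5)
Your proposal is correct and follows essentially the same route as the paper's own proof: the Kannan-type condition is converted into the auxiliary contraction $d(\tilde{x}_{n+1},\tilde{x}_{n})\tilde{\preceq}\frac{\tilde{t}}{\bar{1}-\tilde{t}}d(\tilde{x}_{n},\tilde{x}_{n-1})$, the Cauchy and completeness arguments of \Cref{Teo4.4} are reused, the fixed point is obtained from $d(T\tilde{x}^{*},\tilde{x}^{*})\tilde{\preceq}\tilde{t}\bigl(d(T\tilde{x}^{*},\tilde{x}^{*})+d(\tilde{x}_{n+1},\tilde{x}_{n})\bigr)+d(\tilde{x}_{n+1},\tilde{x}^{*})$ together with closedness of $(P,A)$, and uniqueness follows by the same one-line computation. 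The only (cosmetic) difference is that the paper bounds both terms by choosing $N_{2}$ with $d(\tilde{x}_{n+1},\tilde{x}_{n})\tilde{\ll}(\bar{1}-\tilde{t})\tilde{\frac{c}{2}}$ and $d(\tilde{x}_{n+1},\tilde{x}^{*})\tilde{\ll}(\bar{1}-\tilde{t})\tilde{\frac{c}{2}}$, whereas you bound the first term through the geometric tail; both are equivalent.
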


\begin{proof}
For each $\tilde{x}_{0}\tilde{\in}\tilde{X}$, set $\tilde{x}_{1}=T\tilde{x}_{0}, \, \tilde{x}_{2}=T\tilde{x}_{1}=T^{2}\tilde{x}_{0}, \, \dots \, ,\tilde{x}_{n+1}=T\tilde{x}_{n}=T^{n+1}\tilde{x}_{0}, \, \dots$. Then,
\begin{align*}
d\left( \tilde{x}_{n+1},\tilde{x}_{n} \right) =& d\left( T\tilde{x}_{n},T\tilde{x}_{n-1} \right) \\
\tilde{\preceq}& \tilde{t} \left( d\left( T\tilde{x}_{n},\tilde{x}_{n} \right) + d\left( T\tilde{x}_{n-1},\tilde{x}_{n-1} \right) \right) \\
=& \tilde{t} \left( d\left( \tilde{x}_{n+1},\tilde{x}_{n} \right) + d\left( \tilde{x}_{n},\tilde{x}_{n-1} \right) \right).
\end{align*}
So,
\begin{equation*}
d\left( \tilde{x}_{n+1},\tilde{x}_{n} \right) \tilde{\preceq} \frac{\tilde{t}}{\bar{1}-\tilde{t}} d\left( \tilde{x}_{n},\tilde{x}_{n-1} \right) = \tilde{s}d\left( \tilde{x}_{n},\tilde{x}_{n-1} \right),
\end{equation*}
where $\tilde{s}=\frac{\tilde{t}}{(\bar{1}-\tilde{t})}$. For $n>m$,
\begin{align*}
d\left( \tilde{x}_{n},\tilde{x}_{m} \right) &\tilde{\preceq} d\left( \tilde{x}_{n},\tilde{x}_{n-1} \right) + d\left( \tilde{x}_{n-1},\tilde{x}_{n-2} \right) + \cdots + d\left( \tilde{x}_{m+1},\tilde{x}_{m} \right) \\
&\tilde{\preceq} \left( \tilde{s}^{n-1}+\tilde{s}^{n-2}+ \cdots + \tilde{s}^{m} \right) d\left( \tilde{x}_{1},\tilde{x}_{0} \right) \\
&\tilde{\preceq} \frac{\tilde{s}^{m}}{\bar{1}-\tilde{s}} d\left( \tilde{x}_{1},\tilde{x}_{0} \right).
\end{align*}
Let $\Theta\tilde{\ll}\tilde{c}$ be given. Choose a natural number $N_{1}$ such that $\frac{\tilde{s}^{m}}{\bar{1}-\tilde{s}}d\left( \tilde{x}_{1},\tilde{x}_{0} \right) \tilde{\ll} \tilde{c}$, $\forall m\geq N_{1}$. Thus, $d\left( \tilde{x}_{n},\tilde{x}_{m} \right)\tilde{\ll}\tilde{c}$, for $n>m$. Therefore, $\lbrace \tilde{x}_{n} \rbrace$ is a Cauchy sequence in $\tilde{X}$. Since, $(\tilde{X},d,A)$ is a complete soft cone metric space, there is $\tilde{x}^{*}\tilde{\in}\tilde{X}$ such that $\tilde{x}_{n} \to \tilde{x}^{*}$ as $n \to \infty$. Choose a natural number $N_{2}$ such that $d\left( \tilde{x}_{n+1},\tilde{x}_{n} \right)\tilde{\ll}(\bar{1}-\tilde{t})\tilde{\frac{c}{2}}$ and $d\left( \tilde{x}_{n+1},\tilde{x}^{*} \right)\tilde{\ll}(\bar{1}-\tilde{t})\tilde{\frac{c}{2}}$, $\forall n \geq N_{2}$. Hence, for $n \geq N_{2}$, we have
\begin{align*}
d\left( T\tilde{x}^{*},\tilde{x}^{*} \right) &\tilde{\preceq} d\left( T\tilde{x}_{n},T\tilde{x}^{*} \right) + d\left( T\tilde{x}_{n},\tilde{x}^{*} \right) \\
&\tilde{\preceq} \tilde{t}\left( d\left( T\tilde{x}_{n},\tilde{x}_{n} \right) + d\left( T\tilde{x}^{*},\tilde{x}^{*} \right) \right) + d\left( \tilde{x}_{n+1},\tilde{x}^{*} \right).
\end{align*}
Hence, 
\begin{equation*}
d\left( T\tilde{x}_{n},\tilde{x}^{*} \right)\tilde{\preceq} \frac{\bar{1}}{\bar{1}-\tilde{t}}\left( \tilde{t}d\left( \tilde{x}_{n+1},\tilde{x}_{n} \right) + d\left( \tilde{x}_{n+1},\tilde{x}^{*} \right) \right) \tilde{\ll} \tilde{\frac{c}{2}}+\tilde{\frac{c}{2}}=\tilde{c}.
\end{equation*}
Thus, $d\left( T\tilde{x}^{*},\tilde{x}^{*} \right)\tilde{\ll}\tilde{\frac{c}{m}}$, $\forall m \geq 1$. So, $\tilde{\frac{c}{m}}
-d\left( T\tilde{x}^{*},\tilde{x}^{*} \right) \tilde{\in} (P,A)$, $\forall m \geq 1$. Since, $\tilde{\frac{c}{m}} \to \Theta$ as $m \to \infty$ and $(P,A)$ is closed, $-d\left( T\tilde{x}^{*},\tilde{x}^{*} \right)\tilde{\in}(P,A)$. But, $d\left( T\tilde{x}^{*},\tilde{x}^{*} \right)\tilde{\in}(P,A)$. Therefore, $d\left( T\tilde{x}^{*},\tilde{x}^{*} \right)=\Theta$ and so, $T\tilde{x}^{*}=\tilde{x}^{*}$.

If $\tilde{y}^{*}$ is another fixed soft element of $T$, then
\begin{equation*}
d\left( \tilde{x}^{*},\tilde{y}^{*} \right)=d\left( T\tilde{x}^{*},T\tilde{y}^{*} \right)\tilde{\preceq} \tilde{t}\left( d\left( T\tilde{x}^{*},\tilde{x}^{*} \right) + d\left( T\tilde{y}^{*},\tilde{y}^{*} \right) \right)=\Theta.
\end{equation*}
Hence, $\tilde{x}^{*}=\tilde{y}^{*}$. Therefore, the fixed soft element of $T$ is unique.
\end{proof}

\begin{theorem}
Let $(\tilde{X},d,A)$ be a complete soft cone metric space and $T:(\tilde{X},d,A) \rightarrow (\tilde{X},d,A)$ satisfies the contractive condition 
\begin{equation*}
d\left( T\tilde{x},T\tilde{y} \right) \tilde{\preceq} \tilde{t} \left( d\left( T\tilde{x},\tilde{y} \right) + d\left( T\tilde{y},\tilde{x} \right) \right), \quad \forall \tilde{x},\tilde{y}\tilde{\in}\tilde{X},
\end{equation*}
where $\bar{0}\tilde{\leq}\tilde{t}\tilde{<}\bar{\frac{1}{2}}$ is a soft constant. Then, $T$ has a unique fixed soft element in $\tilde{X}$. For each $\tilde{x}\tilde{\in}\tilde{X}$, the iterative sequence $\lbrace T^{n}\tilde{x} \rbrace$ converges to the fixed soft element.
\end{theorem}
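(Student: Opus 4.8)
The plan is to run exactly the scheme of \Cref{Teo4.4} and of the theorem just above it: manufacture a Cauchy iteration, extract its limit by completeness, and then check that the limit is fixed and unique. The one step that requires a little thought is converting the given asymmetric contraction into a Banach-type recursion along the Picard iterates, which hinges on a diagonal distance being $\Theta$. Concretely, I would fix $\tilde{x}_{0}\tilde{\in}\tilde{X}$, put $\tilde{x}_{n+1}=T\tilde{x}_{n}=T^{n+1}\tilde{x}_{0}$, and apply the hypothesis with $\tilde{x}=\tilde{x}_{n}$, $\tilde{y}=\tilde{x}_{n-1}$. Since $d(T\tilde{x}_{n-1},\tilde{x}_{n})=d(\tilde{x}_{n},\tilde{x}_{n})=\Theta$ by (d1), this gives $d(\tilde{x}_{n+1},\tilde{x}_{n})\tilde{\preceq}\tilde{t}\,d(\tilde{x}_{n+1},\tilde{x}_{n-1})$, and then (d3) yields $d(\tilde{x}_{n+1},\tilde{x}_{n-1})\tilde{\preceq}d(\tilde{x}_{n+1},\tilde{x}_{n})+d(\tilde{x}_{n},\tilde{x}_{n-1})$. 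As $\bar{1}-\tilde{t}$ is a strictly positive soft real, I can solve for $d(\tilde{x}_{n+1},\tilde{x}_{n})$ to obtain
\begin{equation*}
d(\tilde{x}_{n+1},\tilde{x}_{n})\tilde{\preceq}\tilde{s}\,d(\tilde{x}_{n},\tilde{x}_{n-1}),\qquad\tilde{s}=\frac{\tilde{t}}{\bar{1}-\tilde{t}},
\end{equation*}
where $\bar{0}\tilde{\leq}\tilde{s}\tilde{<}\bar{1}$ precisely because $\tilde{t}\tilde{<}\bar{\frac{1}{2}}$.

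From here the argument is identical to that of the preceding theorem. Iterating gives $d(\tilde{x}_{n+1},\tilde{x}_{n})\tilde{\preceq}\tilde{s}^{\,n}d(\tilde{x}_{1},\tilde{x}_{0})$; the triangle inequality plus the geometric estimate give $d(\tilde{x}_{n},\tilde{x}_{m})\tilde{\preceq}\frac{\tilde{s}^{\,m}}{\bar{1}-\tilde{s}}d(\tilde{x}_{1},\tilde{x}_{0})$ for $n>m$; and for any prescribed $\Theta\tilde{\ll}\tilde{c}$ one picks $m$ large so that $\frac{\tilde{s}^{\,m}}{\bar{1}-\tilde{s}}d(\tilde{x}_{1},\tilde{x}_{0})\tilde{\ll}\tilde{c}$, whence $d(\tilde{x}_{n},\tilde{x}_{m})\tilde{\ll}\tilde{c}$. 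Thus $\lbrace\tilde{x}_{n}\rbrace$ is Cauchy, and completeness of $(\tilde{X},d,A)$ produces $\tilde{x}^{*}\tilde{\in}\tilde{X}$ with $\tilde{x}_{n}\to\tilde{x}^{*}$.

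To see $\tilde{x}^{*}$ is fixed, I would use (d3) twice together with the contraction applied to $\tilde{x}=\tilde{x}^{*}$, $\tilde{y}=\tilde{x}_{n}$ and the identity $d(T\tilde{x}_{n},\tilde{x}^{*})=d(\tilde{x}_{n+1},\tilde{x}^{*})$, and solve for $d(T\tilde{x}^{*},\tilde{x}^{*})$ to reach
\begin{equation*}
d(T\tilde{x}^{*},\tilde{x}^{*})\tilde{\preceq}\frac{\tilde{t}}{\bar{1}-\tilde{t}}\,d(\tilde{x}^{*},\tilde{x}_{n})+\frac{\bar{1}+\tilde{t}}{\bar{1}-\tilde{t}}\,d(\tilde{x}_{n+1},\tilde{x}^{*}).
\end{equation*}
Given $\Theta\tilde{\ll}\tilde{c}$, both summands on the right are $\tilde{\ll}\tilde{\frac{c}{2}}$ for $n$ large (since $\tilde{x}_{n}\to\tilde{x}^{*}$, $\tilde{x}_{n+1}\to\tilde{x}^{*}$, and a positive-scalar multiple of an interior element is interior), so $d(T\tilde{x}^{*},\tilde{x}^{*})\tilde{\ll}\tilde{c}$; as $\tilde{c}$ with $\Theta\tilde{\ll}\tilde{c}$ was arbitrary, $d(T\tilde{x}^{*},\tilde{x}^{*})\tilde{\ll}\tilde{\frac{c}{m}}$ for every $m\geq 1$. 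Then $\tilde{\frac{c}{m}}-d(T\tilde{x}^{*},\tilde{x}^{*})\tilde{\in}(P,A)$ for all $m$, and letting $m\to\infty$ with $(P,A)$ closed gives $-d(T\tilde{x}^{*},\tilde{x}^{*})\tilde{\in}(P,A)$; since always $d(T\tilde{x}^{*},\tilde{x}^{*})\tilde{\in}(P,A)$, property (3) of a soft cone forces $d(T\tilde{x}^{*},\tilde{x}^{*})=\Theta$, i.e.\ $T\tilde{x}^{*}=\tilde{x}^{*}$.

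Finally, for uniqueness, if $\tilde{y}^{*}$ is another fixed element then by (d2) and the contraction $d(\tilde{x}^{*},\tilde{y}^{*})=d(T\tilde{x}^{*},T\tilde{y}^{*})\tilde{\preceq}\tilde{t}\bigl(d(T\tilde{x}^{*},\tilde{y}^{*})+d(T\tilde{y}^{*},\tilde{x}^{*})\bigr)=\bar{2}\tilde{t}\,d(\tilde{x}^{*},\tilde{y}^{*})$, and since $\bar{2}\tilde{t}\tilde{<}\bar{1}$ the same closed-cone argument forces $d(\tilde{x}^{*},\tilde{y}^{*})=\Theta$; and since the iteration from an arbitrary $\tilde{x}\tilde{\in}\tilde{X}$ converges to some fixed element, which must be $\tilde{x}^{*}$, the sequence $\lbrace T^{n}\tilde{x}\rbrace$ converges to $\tilde{x}^{*}$ for every $\tilde{x}$. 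The only real obstacle is the bookkeeping in the two ``solve for $d(\cdot,\cdot)$'' steps: one must check that the coefficient moved across, $\bar{1}-\tilde{t}$, is a strictly positive soft real so that division by it is legitimate in the soft Banach space and respects $\tilde{\preceq}$, and that it is $\tilde{t}\tilde{<}\bar{\frac{1}{2}}$ — not merely $\tilde{t}\tilde{<}\bar{1}$ — that keeps both $\tilde{s}=\tilde{t}/(\bar{1}-\tilde{t})$ and $\bar{2}\tilde{t}$ strictly below $\bar{1}$; everything else transcribes the estimates already carried out in \Cref{Teo4.4} and the preceding theorem.
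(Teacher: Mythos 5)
Your proposal is correct and follows essentially the same route as the paper's proof: Picard iteration, the observation that $d(T\tilde{x}_{n-1},\tilde{x}_{n})=\Theta$ turning the hypothesis into $d(\tilde{x}_{n+1},\tilde{x}_{n})\tilde{\preceq}\tilde{s}\,d(\tilde{x}_{n},\tilde{x}_{n-1})$ with $\tilde{s}=\tilde{t}/(\bar{1}-\tilde{t})\tilde{<}\bar{1}$, the geometric Cauchy estimate, the closed-cone argument giving $d(T\tilde{x}^{*},\tilde{x}^{*})=\Theta$, and uniqueness from $\bar{2}\tilde{t}\tilde{<}\bar{1}$. The only differences are cosmetic bookkeeping (your solved coefficient $(\bar{1}+\tilde{t})/(\bar{1}-\tilde{t})$ with $\tilde{c}/2$ splitting versus the paper's $\tilde{c}/3$ splitting), so no further comparison is needed.
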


\begin{proof}
For each $\tilde{x}_{0}\tilde{\in}\tilde{X}$ and $n \geq 1$ , set $\tilde{x}_{1}=T\tilde{x}_{0}, \, \tilde{x}_{2}=T\tilde{x}_{1}=T^{2}\tilde{x}_{0}, \, \dots \, ,\tilde{x}_{n+1}=T\tilde{x}_{n}=T^{n+1}\tilde{x}_{0}, \, \dots$. Then,
\begin{align*}
d\left( \tilde{x}_{n+1},\tilde{x}_{n} \right) =& d\left( T\tilde{x}_{n},T\tilde{x}_{n-1} \right) \\
\tilde{\preceq}& \tilde{t} \left( d\left( T\tilde{x}_{n},\tilde{x}_{n-1} \right) + d\left( T\tilde{x}_{n-1},\tilde{x}_{n} \right) \right) \\
\tilde{\preceq}& \tilde{t} \left( d\left( \tilde{x}_{n+1},\tilde{x}_{n} \right) + d\left( \tilde{x}_{n},\tilde{x}_{n-1} \right) \right).
\end{align*}
So,
\begin{equation*}
d\left( \tilde{x}_{n+1},\tilde{x}_{n} \right) \tilde{\preceq} \frac{\tilde{t}}{\bar{1}-\tilde{t}} d\left( \tilde{x}_{n},\tilde{x}_{n-1} \right) = \tilde{s}d\left( \tilde{x}_{n},\tilde{x}_{n-1} \right),
\end{equation*}
where $\tilde{s}=\frac{\tilde{t}}{(\bar{1}-\tilde{t})}$. For $n>m$,
\begin{align*}
d\left( \tilde{x}_{n},\tilde{x}_{m} \right) &\tilde{\preceq} d\left( \tilde{x}_{n},\tilde{x}_{n-1} \right) + d\left( \tilde{x}_{n-1},\tilde{x}_{n-2} \right) + \cdots + d\left( \tilde{x}_{m+1},\tilde{x}_{m} \right) \\
&\tilde{\preceq} \left( \tilde{s}^{n-1}+\tilde{s}^{n-2}+ \cdots + \tilde{s}^{m} \right) d\left( \tilde{x}_{1},\tilde{x}_{0} \right) \\
&\tilde{\preceq} \frac{\tilde{s}^{m}}{\bar{1}-\tilde{s}} d\left( \tilde{x}_{1},\tilde{x}_{0} \right).
\end{align*}
Let $\Theta\tilde{\ll}\tilde{c}$ be given. Choose a natural number $N_{1}$ such that $\frac{\tilde{s}^{m}}{\bar{1}-\tilde{s}}d\left( \tilde{x}_{1},\tilde{x}_{0} \right) \tilde{\ll} \tilde{c}$, $\forall m\geq N_{1}$. Thus, $d\left( \tilde{x}_{n},\tilde{x}_{m} \right)\tilde{\ll}\tilde{c}$, for $n>m$. Therefore, $\lbrace \tilde{x}_{n} \rbrace$ is a Cauchy sequence in $\tilde{X}$. Since, $(\tilde{X},d,A)$ is a complete soft cone metric space, there is $\tilde{x}^{*}\tilde{\in}\tilde{X}$ such that $\tilde{x}_{n} \to \tilde{x}^{*}$ as $n \to \infty$. Choose a natural number $N_{2}$ such that $d\left( \tilde{x}_{n},\tilde{x}^{*} \right)\tilde{\ll}(\bar{1}-\tilde{t})\tilde{\frac{c}{3}}$. Hence, for $n \geq N_{2}$, we have
\begin{align*}
d\left( T\tilde{x}^{*},\tilde{x}^{*} \right) &\tilde{\preceq} d\left( T\tilde{x}_{n},T\tilde{x}^{*} \right) + d\left( T\tilde{x}_{n},\tilde{x}^{*} \right) \\
&\tilde{\preceq} \tilde{t}\left( d\left( T\tilde{x}^{*},\tilde{x}_{n} \right) + d\left( T\tilde{x}_{n},\tilde{x}^{*} \right) \right) + d\left( \tilde{x}_{n+1},\tilde{x}^{*} \right) \\
&\tilde{\preceq} \tilde{t}\left( d\left( T\tilde{x}^{*},\tilde{x}^{*} \right) + d\left( \tilde{x}_{n},\tilde{x}^{*} \right) + d\left( \tilde{x}_{n+1},\tilde{x}^{*} \right) \right) + d\left( \tilde{x}_{n+1},\tilde{x}^{*} \right).
\end{align*}
Thus, 
\begin{equation*}
d\left( T\tilde{x}^{*},\tilde{x}^{*} \right)\tilde{\preceq} \frac{\bar{1}}{\bar{1}-\tilde{t}}\left( \tilde{t}d\left( \tilde{x}_{n},\tilde{x}^{*} \right) + d\left( \tilde{x}_{n+1},\tilde{x}^{*} \right) \right) + d\left( \tilde{x}_{n+1},\tilde{x}^{*} \right) \tilde{\ll} \tilde{\frac{c}{3}}+\tilde{\frac{c}{3}}+\tilde{\frac{c}{3}}=\tilde{c}.
\end{equation*}
Thus, $d\left( T\tilde{x}^{*},\tilde{x}^{*} \right)\tilde{\ll}\tilde{\frac{c}{m}}$, $\forall m \geq 1$. So, $\tilde{\frac{c}{m}}
-d\left( T\tilde{x}^{*},\tilde{x}^{*} \right) \tilde{\in} (P,A)$. Since, $\tilde{\frac{c}{m}} \to \Theta$ as $m \to \infty$ and $(P,A)$ is closed, $-d\left( T\tilde{x}^{*},\tilde{x}^{*} \right)\tilde{\in}(P,A)$. But, $d\left( T\tilde{x}^{*},\tilde{x}^{*} \right)\tilde{\in}(P,A)$. Therefore, $d\left( T\tilde{x}^{*},\tilde{x}^{*} \right)=\Theta$ and so, $T\tilde{x}^{*}=\tilde{x}^{*}$.

If $\tilde{y}^{*}$ is another fixed soft element of $T$, then
\begin{equation*}
d\left( \tilde{x}^{*},\tilde{y}^{*} \right)=d\left( T\tilde{x}^{*},T\tilde{y}^{*} \right)\tilde{\preceq} \tilde{t}\left( d\left( T\tilde{x}^{*},\tilde{y}^{*} \right) + d\left( T\tilde{y}^{*},\tilde{x}^{*} \right) \right)=2\tilde{t} d\left( \tilde{x}^{*},\tilde{y}^{*} \right).
\end{equation*}
Hence, $d\left( \tilde{x}^{*},\tilde{y}^{*} \right)=\Theta$ and so $\tilde{x}^{*}=\tilde{y}^{*}$. Therefore, the fixed soft element of $T$ is unique.
\end{proof}

\begin{theorem}
Let $(\tilde{X},d,A)$ be a complete soft cone metric space and $T:(\tilde{X},d,A) \rightarrow (\tilde{X},d,A)$ satisfies the contractive condition 
\begin{equation*}
d\left( T\tilde{x},T\tilde{y} \right) \tilde{\preceq} \tilde{t} d\left( \tilde{x},\tilde{y} \right) + \tilde{r} d\left( \tilde{y},T\tilde{x} \right), \quad \forall \tilde{x},\tilde{y}\tilde{\in}\tilde{X},
\end{equation*}
where $\bar{0}\tilde{\leq}\tilde{t},\tilde{r}\tilde{<}\bar{1}$ are soft constants. Then, $T$ has a fixed soft element in $\tilde{X}$. Also, the soft fixed element of $T$ is unique whenever $\tilde{t}+\tilde{r}\tilde{<}\bar{1}$.
\end{theorem}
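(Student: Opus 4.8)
The plan is to run the Picard iteration exactly as in the proof of \Cref{Teo4.4}, after one preliminary observation that annihilates the extra term of the contractive condition along the orbit. Fix $\tilde{x}_{0}\tilde{\in}\tilde{X}$ and set $\tilde{x}_{n+1}=T\tilde{x}_{n}=T^{n+1}\tilde{x}_{0}$. Applying the contractive inequality with $\tilde{x}=\tilde{x}_{n-1}$ and $\tilde{y}=\tilde{x}_{n}$, the term $\tilde{r}\,d(\tilde{x}_{n},T\tilde{x}_{n-1})$ equals $\tilde{r}\,d(\tilde{x}_{n},\tilde{x}_{n})=\Theta$ by (d1), so
\[
d(\tilde{x}_{n+1},\tilde{x}_{n})=d(T\tilde{x}_{n-1},T\tilde{x}_{n})\tilde{\preceq}\tilde{t}\,d(\tilde{x}_{n-1},\tilde{x}_{n}),
\]
and hence $d(\tilde{x}_{n+1},\tilde{x}_{n})\tilde{\preceq}\tilde{t}^{n}d(\tilde{x}_{1},\tilde{x}_{0})$. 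From here the estimate $d(\tilde{x}_{n},\tilde{x}_{m})\tilde{\preceq}\frac{\tilde{t}^{m}}{\bar{1}-\tilde{t}}d(\tilde{x}_{1},\tilde{x}_{0})$ for $n>m$, and therefore the fact that $\lbrace\tilde{x}_{n}\rbrace$ is a Cauchy sequence, follows verbatim as in \Cref{Teo4.4}; completeness of $(\tilde{X},d,A)$ then yields $\tilde{x}^{*}\tilde{\in}\tilde{X}$ with $\tilde{x}_{n}\to\tilde{x}^{*}$.

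Next I would verify that $\tilde{x}^{*}$ is fixed, via the Huang--Zhang-type estimate that avoids assuming continuity of $T$. Using (d3), the contractive condition with $\tilde{x}=\tilde{x}^{*}$ and $\tilde{y}=\tilde{x}_{n}$, and a further application of (d3) to $d(\tilde{x}_{n},T\tilde{x}^{*})\tilde{\preceq}d(\tilde{x}_{n},\tilde{x}^{*})+d(\tilde{x}^{*},T\tilde{x}^{*})$, one obtains
\[
d(T\tilde{x}^{*},\tilde{x}^{*})\tilde{\preceq}d(T\tilde{x}^{*},T\tilde{x}_{n})+d(\tilde{x}_{n+1},\tilde{x}^{*})\tilde{\preceq}(\tilde{t}+\tilde{r})d(\tilde{x}_{n},\tilde{x}^{*})+\tilde{r}\,d(T\tilde{x}^{*},\tilde{x}^{*})+d(\tilde{x}_{n+1},\tilde{x}^{*}).
\]
Collecting the $d(T\tilde{x}^{*},\tilde{x}^{*})$ terms and multiplying through by the reciprocal of $\bar{1}-\tilde{r}$ --- which is legitimate because $\tilde{r}\tilde{<}\bar{1}$ makes $\bar{1}-\tilde{r}$ strictly positive at every parameter --- gives
\[
d(T\tilde{x}^{*},\tilde{x}^{*})\tilde{\preceq}\frac{\bar{1}}{\bar{1}-\tilde{r}}\left((\tilde{t}+\tilde{r})d(\tilde{x}_{n},\tilde{x}^{*})+d(\tilde{x}_{n+1},\tilde{x}^{*})\right).
\]
Since $\tilde{x}_{n}\to\tilde{x}^{*}$, for each $m\geq1$ I can choose $n$ large enough that the right-hand side is $\tilde{\ll}\tilde{\frac{c}{m}}$, so $d(T\tilde{x}^{*},\tilde{x}^{*})\tilde{\ll}\tilde{\frac{c}{m}}$ for all $m$; then $\tilde{\frac{c}{m}}-d(T\tilde{x}^{*},\tilde{x}^{*})\tilde{\in}(P,A)$ for every $m$, and letting $m\to\infty$ together with the closedness of $(P,A)$ forces $-d(T\tilde{x}^{*},\tilde{x}^{*})\tilde{\in}(P,A)$, whence $d(T\tilde{x}^{*},\tilde{x}^{*})=\Theta$ and $T\tilde{x}^{*}=\tilde{x}^{*}$ --- precisely the closing maneuver of \Cref{Teo4.4}.

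For uniqueness under the stronger hypothesis $\tilde{t}+\tilde{r}\tilde{<}\bar{1}$: if $T\tilde{y}^{*}=\tilde{y}^{*}$ as well, then $d(\tilde{x}^{*},\tilde{y}^{*})=d(T\tilde{x}^{*},T\tilde{y}^{*})\tilde{\preceq}\tilde{t}\,d(\tilde{x}^{*},\tilde{y}^{*})+\tilde{r}\,d(\tilde{y}^{*},\tilde{x}^{*})=(\tilde{t}+\tilde{r})d(\tilde{x}^{*},\tilde{y}^{*})$, so both $(\bar{1}-\tilde{t}-\tilde{r})d(\tilde{x}^{*},\tilde{y}^{*})$ and its negative lie in $(P,A)$; the soft cone axiom makes this element $\Theta$, and multiplying by the reciprocal of the positive soft real $\bar{1}-\tilde{t}-\tilde{r}$ gives $d(\tilde{x}^{*},\tilde{y}^{*})=\Theta$, i.e. $\tilde{x}^{*}=\tilde{y}^{*}$. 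The one genuinely delicate point in the whole argument is the rearrangement in the middle paragraph: one must check that the passage from $(\bar{1}-\tilde{r})d(T\tilde{x}^{*},\tilde{x}^{*})\tilde{\preceq}\cdots$ to the displayed bound on $d(T\tilde{x}^{*},\tilde{x}^{*})$ itself is valid in the ordered soft Banach space, and this is exactly what the hypotheses $\tilde{r}\tilde{<}\bar{1}$ (and $\tilde{t}+\tilde{r}\tilde{<}\bar{1}$ for uniqueness) secure; beyond that, every estimate simply transcribes machinery already developed for \Cref{Teo4.4}.
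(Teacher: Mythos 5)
Your proposal is correct, and its skeleton (Picard iteration, the $\frac{\tilde{t}^{m}}{\bar{1}-\tilde{t}}$ Cauchy estimate, completeness, closedness of $(P,A)$ to force $d(T\tilde{x}^{*},\tilde{x}^{*})=\Theta$, and the $(\tilde{t}+\tilde{r})$ uniqueness computation) coincides with the paper's. The one place where you genuinely diverge is the verification that the limit is fixed: you apply the contractive condition with $(\tilde{x},\tilde{y})=(\tilde{x}^{*},\tilde{x}_{n})$, which produces the self-referential term $\tilde{r}\,d(\tilde{x}_{n},T\tilde{x}^{*})\tilde{\preceq}\tilde{r}\,d(\tilde{x}_{n},\tilde{x}^{*})+\tilde{r}\,d(T\tilde{x}^{*},\tilde{x}^{*})$ and forces you to absorb $\tilde{r}\,d(T\tilde{x}^{*},\tilde{x}^{*})$ and multiply by $\frac{\bar{1}}{\bar{1}-\tilde{r}}$; the paper instead exploits the asymmetry of the condition by taking $(\tilde{x},\tilde{y})=(\tilde{x}_{n-1},\tilde{x}^{*})$, so the extra term is $\tilde{r}\,d(\tilde{x}^{*},T\tilde{x}_{n-1})=\tilde{r}\,d(\tilde{x}^{*},\tilde{x}_{n})$, which already tends to $\Theta$; it then simply bounds $\tilde{t},\tilde{r}$ by $\bar{1}$ and uses a three-way $\tilde{\frac{c}{3}}$ split, with no rearrangement at all. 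Your route is perfectly legitimate --- the passage from $(\bar{1}-\tilde{r})\,d(T\tilde{x}^{*},\tilde{x}^{*})\tilde{\preceq}(\tilde{t}+\tilde{r})\,d(\tilde{x}_{n},\tilde{x}^{*})+d(\tilde{x}_{n+1},\tilde{x}^{*})$ to the bound on $d(T\tilde{x}^{*},\tilde{x}^{*})$ is justified by multiplying the order inequality by the nonnegative soft scalar $\frac{\bar{1}}{\bar{1}-\tilde{r}}$, which preserves $\tilde{\preceq}$ by axiom (2) of a soft cone, exactly the maneuver the paper itself performs in its Kannan- and Chatterjea-type theorems --- so your argument has the virtue of being uniform with those proofs, at the cost of one extra (correct but checkable) algebraic step that the paper's choice of arguments avoids. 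Your uniqueness argument is the paper's, written with the cone axioms made explicit, which is if anything more careful than the original.
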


\begin{proof}
For each $\tilde{x}_{0}\tilde{\in}\tilde{X}$ and $n \geq 1$ , set $\tilde{x}_{1}=T\tilde{x}_{0}, \, \tilde{x}_{2}=T\tilde{x}_{1}=T^{2}\tilde{x}_{0}, \, \dots \, ,\tilde{x}_{n+1}=T\tilde{x}_{n}=T^{n+1}\tilde{x}_{0}, \, \dots$. Then,
\begin{align*}
d\left( \tilde{x}_{n+1},\tilde{x}_{n} \right) =& d\left( T\tilde{x}_{n},T\tilde{x}_{n-1} \right) \\
\tilde{\preceq}& \tilde{t} \left( d\left( \tilde{x}_{n},\tilde{x}_{n-1} \right) + d\left( T\tilde{x}_{n-1},\tilde{x}_{n} \right) \right) \\
=&\tilde{t} d\left( \tilde{x}_{n},\tilde{x}_{n-1} \right) \\
\tilde{\preceq}& \tilde{t}^{n}  d\left( \tilde{x}_{1},\tilde{x}_{0} \right).
\end{align*}
Thus, for $n>m$, we have
\begin{align*}
d\left( \tilde{x}_{n},\tilde{x}_{m} \right) &\tilde{\preceq} d\left( \tilde{x}_{n},\tilde{x}_{n-1} \right) + d\left( \tilde{x}_{n-1},\tilde{x}_{n-2} \right) + \cdots + d\left( \tilde{x}_{m+1},\tilde{x}_{m} \right) \\
&\tilde{\preceq} \left( \tilde{t}^{n-1}+\tilde{t}^{n-2}+ \cdots + \tilde{t}^{m} \right) d\left( \tilde{x}_{1},\tilde{x}_{0} \right) \\
&\tilde{\preceq} \frac{\tilde{t}^{m}}{\bar{1}-\tilde{t}} d\left( \tilde{x}_{1},\tilde{x}_{0} \right).
\end{align*}
Let $\Theta\tilde{\ll}\tilde{c}$ be given. Choose a natural number $N_{1}$ such that $\frac{\tilde{t}^{m}}{\bar{1}-\tilde{t}}d\left( \tilde{x}_{1},\tilde{x}_{0} \right) \tilde{\ll} \tilde{c}$, $\forall m\geq N_{1}$. Thus, $\left( \tilde{x}_{n},\tilde{x}_{m} \right)\tilde{\ll}\tilde{c}$, for $n>m$. Therefore, $\lbrace \tilde{x}_{n} \rbrace$ is a Cauchy sequence in $\tilde{X}$. Since, $(\tilde{X},d,A)$ is a complete soft cone metric space, there is $\tilde{x}^{*}\tilde{\in}\tilde{X}$ such that $\tilde{x}_{n} \to \tilde{x}^{*}$ as $n \to \infty$. Choose a natural number $N_{2}$ such that $d\left( \tilde{x}_{n},\tilde{x}^{*} \right)\tilde{\ll}(\bar{1}-\tilde{t})\tilde{\frac{c}{3}}$. Hence, for $n \geq N_{2}$, we have
\begin{align*}
d\left( T\tilde{x}^{*},\tilde{x}^{*} \right) \tilde{\preceq}& d\left( \tilde{x}_{n},T\tilde{x}^{*} \right) + d\left( \tilde{x}_{n},\tilde{x}^{*} \right) \\
=& d\left( T\tilde{x}_{n-1},T\tilde{x}^{*} \right)+d\left( \tilde{x}_{n},\tilde{x}^{*} \right) \\
\tilde{\preceq}& \tilde{t} d\left( \tilde{x}_{n-1},\tilde{x}^{*} \right) + \tilde{r} d\left( T\tilde{x}_{n-1},\tilde{x}^{*} \right) + d\left( \tilde{x}_{n},\tilde{x}^{*} \right) \\
\tilde{\preceq}& d\left( \tilde{x}_{n-1},\tilde{x}^{*} \right) + d\left( \tilde{x}_{n},\tilde{x}^{*} \right) + d\left( \tilde{x}_{n},\tilde{x}^{*} \right) \\
\tilde{\ll}& \tilde{\frac{c}{3}}+\tilde{\frac{c}{3}}+\tilde{\frac{c}{3}}=\tilde{c}.
\end{align*}
Thus, $d\left( T\tilde{x}^{*},\tilde{x}^{*} \right)\tilde{\ll}\tilde{\frac{c}{m}}$, $\forall m \geq 1$. So, $\tilde{\frac{c}{m}}
-d\left( T\tilde{x}^{*},\tilde{x}^{*} \right) \tilde{\in} (P,A)$. Since, $\tilde{\frac{c}{m}} \to \Theta$ as $m \to \infty$ and $(P,A)$ is closed, $-d\left( T\tilde{x}^{*},\tilde{x}^{*} \right)\tilde{\in}(P,A)$. But, $d\left( T\tilde{x}^{*},\tilde{x}^{*} \right)\tilde{\in}(P,A)$. Therefore, $d\left( T\tilde{x}^{*},\tilde{x}^{*} \right)=\Theta$ and so, $T\tilde{x}^{*}=\tilde{x}^{*}$.

If $\tilde{y}^{*}$ is another fixed soft element of $T$ and $\tilde{t}+\tilde{r}\tilde{<}\bar{1}$, then
\begin{equation*}
d\left( \tilde{x}^{*},\tilde{y}^{*} \right)=d\left( T\tilde{x}^{*},T\tilde{y}^{*} \right)\tilde{\preceq} \tilde{t} d\left( \tilde{x}^{*},\tilde{y}^{*} \right) + \tilde{r} d\left( T\tilde{x}^{*},\tilde{y}^{*} \right) =(\tilde{t}+\tilde{r}) d\left( \tilde{x}^{*},\tilde{y}^{*} \right).
\end{equation*}
Hence, $d\left( \tilde{x}^{*},\tilde{y}^{*} \right)=\Theta$ and so $\tilde{x}^{*}=\tilde{y}^{*}$. Therefore, the fixed soft element of $T$ is unique whenever $\tilde{t}+\tilde{r}\tilde{<}\bar{1}$.
\end{proof}

\section*{Conclusion}
In this paper we introduced the concept of soft cone metric spaces via soft element and we worked the convergence of sequences and Cauchy sequences in such spaces. We also discussed some fixed point theorems of contractive mapping on soft cone metric spaces. There is ample scope for further research on soft cone metric spaces. This paper is a basis to works on the above mentioned ideas.

%\bibliographystyle{elsarticle-num}
%\bibliography{mybibfile}

\end{document}